\def\diag{\operatorname{diag}}
\newtheorem{theorem}{Theorem}[section]
\newtheorem{lemma}[theorem]{Lemma}
\numberwithin{equation}{section}
\newtheorem{definition}[theorem]{Definition}
\newtheorem{remark}[theorem]{Remark}
\numberwithin{equation}{section}
\title{Extraction of the index of refraction by embedding multiple small
inclusions} 
\author{Ahmed Alsaedi
\thanks{Nonlinear Analysis and Applied Mathematics Research Group (NAAM), Department of  Mathematics,
Faculty of Sciences,
King Abdulaziz University,
P.O. Box 80203,
Jeddah 21589,
Saudi Arabia.}
\and
Faris Alzahrani\footnotemark[1]
\and  
 Durga Prasad Challa 
\thanks{Department of mathematics, 
Tallinn University of Technology, Tallinn 19068, Estonia.
(Email: durga.challa@ttu.ee).
}
\and Mokhtar Kirane\footnotemark[1]
\thanks{Laboratoire de Math\'ematiques,
P\^ole Sciences et Technologies, 
Universit\'e de La Rochelle, 
Avenue Michel Cr\'epeau
17042, La Rochelle Cedex, France, (Email: mokhtar.kirane@univ-lr.fr).}
\and  
Mourad Sini
 \thanks{RICAM, Austrian Academy of Sciences,
Altenbergerstrasse 69, A-4040, Linz, Austria.
(Email: mourad.sini@oeaw.ac.at).
}}
\begin{document}
 \maketitle
\begin{abstract}
We deal with the problem of reconstructing material coefficients from the far-fields 
they generate. 
By embedding small (single) inclusions to these media, located at points $z$ in the 
support of these materials, and measuring the far-fields generated by these deformations 
we can extract the values of the total field (or the energies) generated by these media at the points $z$. 
The second step is to extract the values of the material coefficients from these internal
values of the total field. The main difficulty in using internal fields is the
treatment of their possible zeros.   

In this work, we propose to deform the medium using multiple (precisely double) and close inclusions 
instead of only single ones. By doing so, we derive 
from the asymptotic expansions of the far-fields the internal values of the Green function, in addition 
to the internal values of the total fields. This is possible because of the deformation of the 
medium with multiple inclusions which generates scattered fields due to the 
multiple scattering between these inclusions.  Then, the values of the index of refraction
can be extracted from the singularities of the Green function. Hence, we overcome the 
difficulties arising from the zeros of the internal fields.

We test these arguments for the acoustic scattering by a refractive index in presence of 
inclusions modeled by the impedance type small obstacles. 

 \end{abstract}
 \textbf{Keywords}: Inverse acoustic scattering, small inclusions, multiple scattering, refraction index.



\section{Introduction and statement of the results}\label{Introduction-smallac-sdlp}

\subsection{Motivation of the problem}

Let $n$ be a bounded and measurable function in $\mathbb{R}^3$ such that the support of 
$(n-1)$ is a bounded domain $\Omega$.
We are concerned with the acoustic scattering problem 
\begin{equation}
(\Delta + \kappa^{2}n^2(x))V^{t}=0 \mbox{ in }\mathbb{R}^{3},\label{acimpoenetrable-2}
\end{equation}
where $V^t:=V^s+V^i$ and $V^i$ is an incident field satisfying $(\Delta +\kappa^2)V^i=0$ in $\mathbb{R}^3$. 
For simplicity, we take incident plane waves $V^i(x, \theta):=e^{i\kappa x \cdot \theta}$ where 
$\theta \in \mathbb{S}^2$ and $\mathbb{S}^2$ is the unit sphere. The scattered field $V^s$ satisfies the 
Sommerfeld radiation condition:
\begin{equation}\label{radiationc-2}
\frac{\partial V^{s}}{\partial |x|}-i\kappa V^{s}=o\left(\frac{1}{|x|}\right), |x|\rightarrow\infty. 
\end{equation}

The scattering problem (\ref{acimpoenetrable-2}-\ref{radiationc-2}) is well posed, see \cite{C-K:1998}. 
Applying Green's formula to $V^s$, we can show that the scattered field $V^s(x, \theta)$ has the following 
asymptotic expansion:
\begin{equation}\label{far-field-n}
 V^s(x, \theta)=\frac{e^{i \kappa |x|}}{4\pi|x|}V^{\infty}(\hat{x}, \theta) + O(|x|^{-2}), \quad |x|
\rightarrow \infty,
\end{equation}
where the function $V^{\infty}(\hat{x}, \theta)$ for $(\hat{x}, \theta)\in \mathbb{S}^{2} \times\mathbb{S}^{2}$  
is the corresponding far-field pattern.
\par Our interest in this work is related to the classical inverse scattering problem 
which consists of
reconstructing $n(x),\; x \in \Omega,$ from the far-field data 
$V^{\infty}(\hat{x}, \theta)$ for some 
$(\hat{x}, \theta)\in \mathbb{S}^{2} \times\mathbb{S}^{2}$. This type of problem is well
studied and there are 
several algorithms to solve it in the case when $\hat{x}$ and $\theta$ are taken in the
whole $ \mathbb{S}^{2}$, 
see \cite{Nachman:1988, Novikov:1988, Ramm:1988}. It is also known that this problem is 
very unstable. 
Precisely, the modulus of continuity is in general of logarithmic type, 
see \cite{MRAMAG-book1}.

Recently, based on a new type of experiments, a different approach was proposed, see for
instance \cite{A-B-C-T-F:2008, A-C-D-R-T} and the references therein. It is divided into two steps. In the first one, we deform the 
acoustic medium by small inclusions located in the region containing the support of
$(n-1)$, and measure the far-fields generated by these deformations. 
From these far-fields, we can extract the total fields due to the medium $n$ in the 
interior of the support of $(n-1)$.  In the second step, we reconstruct $n(x)$ from 
these interior values of the total fields. Recently, there was an increase of interest 
in reconstructing media from internal measurements. This is also related to the hybrid 
methods introduced in the medical imaging community, see \cite{A:2008, B-B-M-T, B:2014, F-T:2010,
F-T:2011, L-F:2013} for different setups and models. In contrast to 
the instability of the classical inverse scattering problem, the reconstruction from 
internal measurements is stable, see \cite{Alessandrini:2014, A-G-W:2012, H-M-N:2014, Triki:2010}. 
However, the disadvantage with internal fields, i.e. internal values of the total fields,
is the existence of their zeros which need to be properly dealt with to stabilize any algorithm. 
To overcome this short coming, one can think of using measurements related to multiple 
frequencies $\kappa_1,\kappa_2, etc.$, see \cite{Alberti:2013}. 

Our objective in this paper is to propose an alternative to overcome the last 
disadvantage. For this, we propose to deform the medium using multiple (precisely double) and close 
inclusions instead of just single inclusions. Then,
measuring the generated far-fields by these multiple inclusions, we can extract, not only 
the internal total fields, but also the internal values of the Green's function related 
to the non deformed medium $n$. This is possible because by deforming the medium with
multiple and close inclusions we generate scattered fields due to the multiple scattering 
between these inclusions. Precisely, the far-fields we measure encode at least the second 
order term in the Foldy-Lax approximation and not only the first order (or the Born 
approximation) as it is done when deforming with single or multiple but well separated inclusions. 
Finally, we extract the values $n(x)$, $x$ in $\Omega$, from the singularities of this 
Green's function.  Hence, we avoid the problems coming from the
zeros of the internal total fields.   

The accuracy of the reconstruction is related to the minimum distance $d$ between the pairs of 
inclusions, in addition to the radius of the perturbations $a$. In practice, the perturbations are 
created using focusing waves, see \cite{A:2008, F-T:2010, L-F:2013} for instance, hence we are limited by the resolution of the used waves (acoustic waves for instance). In other words,
 the small perturbations cannot be very close. In our analysis, the ratio $\frac{a}{d}$ is 
of the form $a^{1-t}$, with a parameter $t\in [0, 1]$,  and it can be chosen small by taking 
$t$ near $0$. This allows to have the minimum distance between the inclusions quite large 
compared to their sizes. Since the accuracy of the reconstruction, see Theorem \ref{Main-results}, is of the order $a^t$, with
$t$ in $[0, t^*]$ with some $ t^* <1$, then one should find a reasonable balance between the limited resolution
of the focusing waves, to be used to perturb the medium, and the desired accuracy 
of the reconstruction. More details related to this issue are provided in Remark \ref{Remark}.

We complete this section by describing the  kind of deformations and the corresponding far-field measurements we use and then we state the derived formulas to extract the values of the index of refraction $n$. In Section $2$, we justify these formulas with the emphasize on the explicit dependence of the error terms on the parameters modeling the small perturbations (i.e. the maximum radii, the minimum distance between them and the scaled surface impedances). In Section $3$, we discuss the stability issue by providing the corresponding formulas when the measured data are contaminated with additive noises and when the small perturbations are shifted from their exact locations (as moving from step $2$ to step $3$ in collecting the data, see Section $1.3$). We provide the regimes under which the derived formulas are still valid. 

\subsection{Deformation by multiple inclusions}
We give the details of this approach by using inclusions of the form of obstacles 
of impedance type. 
Let $B$ be the ball with the center at the origin and radius $1$.  
We set $D_m:=a B+z_m$ to be the small bodies characterized by the parameter 
$a>0$ and the locations $z_m\in \mathbb{R}^3$, $m=1,\dots,M$.  
We denote by  $U^{s}$ the acoustic field scattered by the $M$ small bodies 
$D_m\subset \mathbb{R}^{3}$, due to 
the incident field $U^{i}$ (mainly the plane incident waves 
$U^{i}(x,\theta):=V^i(x, \theta):=e^{ikx\cdot\theta}$
with the incident direction $\theta \in \mathbb{S}^2$, where $\mathbb{S}^2$ 
being the unit sphere), 
with impedance boundary conditions. Hence the total field $U^{t}:=U^{i}+U^{s}$ 
satisfies the 
following exterior impedance problem for the acoustic waves

\begin{equation}
(\Delta + \kappa^{2}n^2(x))U^{t}=0 \mbox{ in }\mathbb{R}^{3}\backslash \left(\mathop{\cup}_{m=1}^M \bar{D}_m\right),\label{acimpoenetrable-1}
\end{equation}
\begin{equation}
\left.\frac{\partial U^{t}}{\partial \nu_m}+\lambda_m U^{t}\right|_{\partial D_m}=0,\, 1\leq m \leq M, \label{acgoverningsupport-1}  
\end{equation}
\begin{equation}
\frac{\partial U^{s}}{\partial |x|}-i\kappa U^{s}=o\left(\frac{1}{|x|}\right), |x|\rightarrow\infty. \label{radiationc-1}
\end{equation}
The scattering problem 
(\ref{acimpoenetrable-1})-(\ref{radiationc-1}) is well posed, see \cite{C-K:1983, C-K:1998}, and we can also allow $\Im \lambda_m$ to be negative, see \cite{C-S:2016}.

\bigskip

\begin{definition}\label{Def1}
We define 
\begin{enumerate}

 \item  $
d:=\min\limits_{\substack{m\neq j\\1\leq m,j\leq M }} d_{mj},$
where $\,d_{mj}:=dist(D_m, D_j)$. 

\item $\kappa_{\max}$ as the upper bound of the used wave numbers, i.e. $\kappa\in[0,\,\kappa_{\max}]$.
\bigskip

The distribution of the scatterers is modeled as follows:
\item the number $M~:=~M(a)~:=~O(a^{-s})\leq M_{max} a^{-s}$ with a given positive constant $M_{max}$.\\
\item the minimum distance $d~:=~d(a)~\approx ~a^t$, i.e. $d_{min} a^t \leq d(a) \leq d_{max}a^t $, with 
given positive constants $d_{min}$ and $d_{max}$. \\
 \item the surface impedance $\lambda_m~:=~\lambda_{m,0}a^{-\beta}$, where $\lambda_{m,0} \neq 0$ and 
 might be a complex number.
\end{enumerate}
\end{definition}
Here the real numbers $s$, $t$ and $\beta$ are assumed to be non negative.
We call $M_{max}, d_{min}, d_{max}$ and $\kappa_{max}$ 
the set of the a priori bounds. In (\cite{C-S:2016}, Corollary 1.3), we have shown that there exist positive constants 
$a_0$, $\lambda_-$, $\lambda_+$ \footnote{In the case where $s < 2-\beta$, we need no a priori conditions 
on $\lambda_-$, $\lambda_+$.} 
depending only on the set of the a priori bounds and on $n_{max}:=\Vert n\Vert_{L^\infty(\Omega)}$ such that
if 
\begin{equation} \label{conditions-1}
a \leq a_0,\; \vert \lambda_{m, 0}\vert \leq \lambda_+, \; \vert \Re(\lambda_{m, 0}) \vert \geq \lambda_-,\;~~ \beta \leq 1, \;~~ s \leq  2-\beta,\;~~\frac{s}{3}\leq t,
\end{equation} 
then the far-field pattern $U^\infty(\hat{x},\theta)$ has the following asymptotic expansion 

\begin{equation}\label{x oustdie1 D_m farmain-2}
U^\infty(\hat{x},\theta)=V^\infty(\hat{x},\theta)+ \sum_{m=1}^{M}V^t(z_m, -\hat{x})\mathbb{Q}_m+
O\left(M \;\mathcal{C}\;
 a\right),
 \end{equation}
uniformly in $\hat{x}$ and $\theta$ in $\mathbb{S}^2$. Here $\mathcal{C} :=\max_m C_m$.
The constant appearing in the estimate $O(.)$ depends only on 
the set of the a priori bounds, $\lambda_-$, $\lambda_+$ and on $n_{max}$. The coefficients $\mathbb{Q}_m$, $m=1,..., M,$ are the solutions of the following linear algebraic system
\begin{eqnarray}\label{fracqcfracmain-2}
 \mathbb{Q}_m +\sum_{\substack{j=1 \\ j\neq m}}^{M}C_m G(z_m,z_j)\mathbb{Q}_j&=&-C_mV^{t}(z_m, \theta),~~
\end{eqnarray}
for $ m=1,..., M,$ where
\begin{equation}\label{Cm-s-1}
C_m:=\frac{\lambda_m|\partial D_m|}{-1+\lambda_m I_m}
\end{equation}
and $G(\cdot, \cdot)$ is the Green function corresponding to the scattering problem 
(\ref{acimpoenetrable-2}-\ref{radiationc-2}).

The quantity $I_m:=\int_{\partial D_m}\frac{1}{4\pi\vert{s_m-t}\vert} d{s_m}, t \in \partial D_m,$ is a constant if $D_m$ is a ball and this constant is equal to the radius of $D_m$, i.e. $I_m=a$.
 The algebraic system \eqref{fracqcfracmain-2} is invertible under the condition:

\begin{eqnarray}\label{invertibilityconditionsmainthm-1}
 s \leq 2 -\beta.
\end{eqnarray} 

We use surface impedance functions of the form $\lambda_m=\lambda_{0, m} a^{-\beta}$, i.e. with $\lambda_{0, m} \neq 1$ in the case $\beta=1$, so that the constants 
\begin{equation}\label{C-m}
C_m=\frac{4\pi\; a^{2-\beta}\; \lambda_{0,m}}{-1+\lambda_{0,m} a^{1-\beta}}
\end{equation}
 are well defined.
In the subsequent sections, we choose $\beta =1$ and $\lambda_{0, m}:=1-a^h$, $h \geq 0$. In this case 
\begin{equation}\label{actual-capacitances}
C_m=-4\pi a^{1-h}(1-a^h).
\end{equation}

Hence (\ref{x oustdie1 D_m farmain-2}) becomes
\begin{equation}\label{x oustdie1 D_m farmain-3}
U^\infty(\hat{x},\theta)=V^\infty(\hat{x},\theta)+\sum_{m=1}^{M}V^t(z_m, -\hat{x})\mathbb{Q}_m+
O\left(a^{2-s-h}\right),
 \end{equation}
uniformly in $\hat{x}$ and $\theta$ in $\mathbb{S}^2$.

\subsection{The extraction formulas}\label{Sec:extractform} We proceed in three steps in collecting the measured data:
\begin{enumerate}
\item First step. We measure the far-fields before making any deformation. In this case the collected data are $ V^{\infty}(-\theta_i,\theta_j)$, $i,j=1,..., N_0 \geq 2$.
\bigskip

\item Second step. We deform the medium by 
\textcolor{black}{ a isolated inclusion $z^l_m$}, 
and then measure the corresponding data $ \textcolor{black}{U^{\infty}( -\theta_{i},\theta_{j}):=} U^{\infty}(z^l_m; -\theta_{i},\theta_{j})$, 
for the directions $\theta_{j}$, $j=1,...,N_0\geq2$, keeping in mind that we redo the experiment by moving the inclusion centered at $z^l_m$ for $m=1, 2,..., M$ and $l=1,2$, see Fig \ref{fig:1}.   

\begin{figure}[htp]
\centering
\includegraphics[width=4cm,height =4cm]{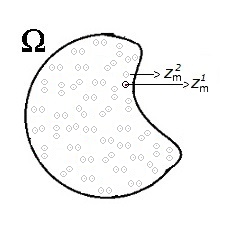}
\caption{The figure describes the experiments in step 2. 
We create only one inclusion and measure the corresponding far-fields. Then, we move the inclusion inside $\Omega$.}\label{fig:1}
\end{figure}
\item Third step. We deform the medium using two inclusions close to each other
i.e. the two points $z^1_m$ and $z^2_m$ 
in $\Omega$, are such that $\vert z^2_m-z^1_m \vert \sim a^t$, as $a \rightarrow 0$, 
with $t>0$, see Fig \ref{fig:2}. In this case, the collected data are 
$ \textcolor{black}{W^{\infty}( -\theta_{i},\theta_{j}):=} 
 W^{\infty}(z^l_m;-\theta_i,\theta_j)$, $i,j=1,..., N_0 \geq 2 $. As in step $2$, we create one couple of two close inclusions at once.
Then, we move the couple of inclusions, for $m=1, ..., M$, inside $\Omega$.
\begin{figure}[htp]
\centering
\includegraphics[width=4cm,height =4cm]{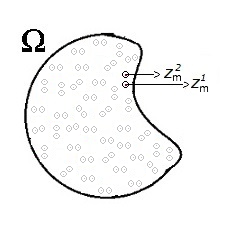}
\caption{\textcolor{black}{The figure describes the experiments in step 3. We create double inclusions distributed close to each other and we measure the corresponding far-fields. Then, we move the two inclusions inside $\Omega$}.}\label{fig:2}
\end{figure}
\end{enumerate}
\bigskip

The reconstruction formulas of the index of refraction from the described measured data are summarized in the following theorem. 
 
\begin{theorem}\label{Main-results} Assume $n$ to be a measurable and bounded function in $\mathbb{R}^3$ such that the support of 
$(n-1)$ is a bounded domain $\Omega$. 
Let the non negative parameters $\beta, h$ and $t$ describing the set of the 
small inclusions, embedded in $\Omega$, be such that
\begin{equation}\label{cond-beta-s-h-t}
 \beta =1,\; 0<h\; \mbox{ and }\; h+2t<1.
\end{equation}
Let $m=1, 2,..., M$ be fixed. We have the following formulas:
\begin{enumerate}
\item The vector of total fields $(V^t(z^l_m, \theta_j))^{N_0}_{j=1}$ (or $-(V^t(z^l_m, \theta_j))^{N_0}_{j=1}$) can be recovered from $U^{\infty}(z^l_m,\;-\theta_{i},\theta_{j})-V^{\infty}(-\theta_{i},\theta_{j})$ as follows:
\begin{equation}\label{total-fields}
C^{-1}_{m, l} \left[ U^{\infty}(z^l_m, -\theta_{i},\theta_{j})-V^{\infty}(-\theta_{i},
\theta_{j})\right]=-V^t(z^l_m, \theta_{i})V^t(z^l_m, \theta_{j})+O(a),
\end{equation}
$\; \mbox{ for } i,\; j=1, 2, ...,N_0; \;~ l=1, 2$. \footnote{We recall that $C_{m, l}=-4\pi a^{1-h}(1-a^h)$, see (\ref{actual-capacitances}).} 
As the two points $z^1_m$ and $z^2_m$ are close then the sign before the vectors above is the same for $l=1$ and $l=2$.
\bigskip

\item Knowing the vectors of total fields $(V^t(z^l_m, \theta_j))^{N_0}_{j=1}$, for $l=1,2$, (or $-(V^t(z^l_m, \theta_j))^{N_0}_{j=1}$, for $l=1,2$) the values of the Green function $G(z^1_m,z^2_m)$ can be recovered as follows. We have 
\begin{equation}\label{Green-function-1}
G(z^1_m, z^2_m):=(\tilde{\textbf{G}})_{1,2}\;, 
\end{equation}
where the matrix $\tilde{\textbf{G}}$ is computable as
 \begin{equation}\label{Green-function-2}
(\textbf{V}\textbf{V}^\top)^{-1}\textbf{C}^{-2} \textbf{V}\left[\mathbf{W}^{\infty} -\mathbf{V}^{\infty} \right]\textbf{V}^\top(\textbf{V}\textbf{V}^\top)^{-1}
\textcolor{black}{+} \textbf{C}^{-1}=\tilde{\textbf{G}}+\;~ O\left(
 a^{1-h-2t}+a^{h}\right)
\end{equation}
with the matrix $\textbf{V}$ constructed from (\ref{total-fields}) via 
$(\textbf{V})_{l,j}:=V^t(z^l_m, \theta_j)$, $l=1,2$ and 
$j=1, 2, ..., N_0$ and $\textbf{C}:=\diag\{C_{m,l}\}$. 
\footnote{ The matrix $\diag\{C_{m,l}\}$ is diagonal and the diagonal values are defined
by the vector $\{C_{m,1}, C_{m,2}\}$. Since $C_{m,l}=-4\pi a^{1-h}(1-a^h)$ as the shapes of the inclusions are the same,
i.e. balls of radius $a$, we have $\diag\{C_{m,l}\}=-4\pi a^{1-h}(1-a^h) I_{2}$ where 
$I_{2}$ is the identity matrix in $\mathbb{R}^{2}$.} The matrices $\mathbf{W}^{\infty}$ and $\mathbf{V}^{\infty}$ are defined as $(\mathbf{W}^{\infty})_{i, j}:=W^{\infty}(z^l_m;-\theta_i,\theta_j)$ and
$(\mathbf{V}^{\infty})_{i, j}:=V^{\infty}(-\theta_i,\theta_j)$, for $i, j=1, ...,N_0$. 
\bigskip

\item If, in addition, $n$ is of class $C^{\alpha},\; \alpha \in (0,1],$ 
then\footnote{We can also take $\alpha =0$, i.e. $n$ is only continuous, and obtain 
(\ref{formulas-for-n}) with an appropriate 
change in the error term.} from the values of the Green function 
$G(z^1_m,z^2_m)$, we reconstruct the values of the index of 
refraction $n(z^l_m)$ as follows:
\begin{equation}\label{formulas-for-n}
n(z^l_m)=\left[\frac{4\pi}{i\kappa} G(z^2_m, z^1_m)-\frac{1}{i\kappa\vert z^2_m-z^1_m\vert} \right ] +O(a^t),\;~ t>0.
\end{equation}
\end{enumerate}
\end{theorem}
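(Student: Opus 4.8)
The plan is to establish the three formulas in order, each feeding the next, using only the far-field expansion \eqref{x oustdie1 D_m farmain-3} with the Foldy--Lax system \eqref{fracqcfracmain-2} (specialized first to a single inclusion, then to a single close pair) and the behaviour of $G$ near its diagonal. For part (1) I would fix $m,l$, perturb $n$ by the single inclusion $D^l_m$, note that then \eqref{x oustdie1 D_m farmain-3} holds with $M=1$ (so $s=0$) and \eqref{fracqcfracmain-2} collapses to $\mathbb{Q}_{m,l}=-C_{m,l}V^t(z^l_m,\theta)$, substitute, and take the observation direction equal to $-\theta_i$, obtaining $U^{\infty}(z^l_m;-\theta_i,\theta_j)-V^{\infty}(-\theta_i,\theta_j)=-C_{m,l}V^t(z^l_m,\theta_i)V^t(z^l_m,\theta_j)+O(a^{2-h})$; dividing by $C_{m,l}=-4\pi a^{1-h}(1-a^h)$, whose reciprocal is $O(a^{h-1})$, turns the remainder into $O(a)$, which is \eqref{total-fields}. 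Its right side is the outer product of $v^l:=(V^t(z^l_m,\theta_j))_j$ with itself, so $v^l$ is determined up to a global sign $\varepsilon_l\in\{\pm1\}$; since $|z^1_m-z^2_m|\sim a^t\to0$ and $V^t(\cdot,\theta_j)$ is continuous in $\Omega$ one has $|v^1-v^2|=O(a^t)$, so for $a$ small any fixed normalization rule selects $\varepsilon_1=\varepsilon_2$.

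For part (2) I would perturb $n$ by the close pair $\{D^1_m,D^2_m\}$ ($M=2$). Writing $\textbf{C}=\diag(C_{m,1},C_{m,2})=-4\pi a^{1-h}(1-a^h)I_2$, letting $\textbf{G}_0:=\tilde{\textbf{G}}$ be the $2\times2$ matrix with zero diagonal and off-diagonal entries $G(z^1_m,z^2_m),G(z^2_m,z^1_m)$, and setting $\textbf{v}(\theta)=(V^t(z^1_m,\theta),V^t(z^2_m,\theta))^{\top}$, \eqref{fracqcfracmain-2} becomes $(I_2+\textbf{C}\textbf{G}_0)\mathbb{Q}(\theta)=-\textbf{C}\textbf{v}(\theta)$. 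Since $|G(z^1_m,z^2_m)|=O(a^{-t})$ and $\|\textbf{C}\|=O(a^{1-h})$ we get $\|\textbf{C}\textbf{G}_0\|=O(a^{1-h-t})$ with $1-h-t>t>0$, so $I_2+\textbf{C}\textbf{G}_0$ is invertible with a convergent Neumann series. Plugging $\mathbb{Q}(\theta)=-(I_2+\textbf{C}\textbf{G}_0)^{-1}\textbf{C}\textbf{v}(\theta)$ into \eqref{x oustdie1 D_m farmain-3} and assembling the directions into $\textbf{V},\mathbf{W}^{\infty},\mathbf{V}^{\infty}$ gives $\mathbf{W}^{\infty}-\mathbf{V}^{\infty}=-\textbf{V}^{\top}(I_2+\textbf{C}\textbf{G}_0)^{-1}\textbf{C}\textbf{V}+O(a^{2-h})$. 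Choosing the directions so that $\textbf{V}$ has rank $2$ (hence $\textbf{V}\textbf{V}^{\top}$ invertible, the one-sided inverses bounded), I would multiply on the left by $(\textbf{V}\textbf{V}^{\top})^{-1}\textbf{V}$ and on the right by $\textbf{V}^{\top}(\textbf{V}\textbf{V}^{\top})^{-1}$ to extract $-(I_2+\textbf{C}\textbf{G}_0)^{-1}\textbf{C}$, then expand the Neumann series (using that $\textbf{C}$ is a scalar matrix, so $\textbf{C}^{\pm1}$ commute), multiply by $\textbf{C}^{-2}$ and add $\textbf{C}^{-1}$; this reproduces \eqref{Green-function-2}, the $O(a^{1-h-2t})$ coming from truncating the Neumann series after the $\textbf{C}\textbf{G}_0\textbf{C}$-term (whence $h+2t<1$) and the $O(a^{h})$ from carrying the $O(a^{2-h})$ remainder of \eqref{x oustdie1 D_m farmain-3} through the two factors $\textbf{C}^{-1}=O(a^{h-1})$. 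Finally, running the same computation on the sign-ambiguous data $\textbf{D}\textbf{V}$, $\textbf{D}=\diag(\varepsilon_1,\varepsilon_2)$, conjugates the output by $\textbf{D}$ and leaves the scalar $\textbf{C}^{-1}$ fixed, so its $(1,2)$ entry is $\varepsilon_1\varepsilon_2G(z^1_m,z^2_m)=G(z^1_m,z^2_m)$ since $\varepsilon_1=\varepsilon_2$ by part (1); this gives \eqref{Green-function-1}.

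For part (3) I would invoke the near-diagonal structure of $G$. Writing $G(x,y)=\Phi_{\kappa n(y)}(x-y)+R_y(x)$ with $\Phi_k(z)=e^{ik|z|}/(4\pi|z|)$ the outgoing fundamental solution of $\Delta+k^2$, the remainder $R_y$ solves $(\Delta_x+\kappa^2n^2(x))R_y=-\kappa^2(n^2(x)-n^2(y))\Phi_{\kappa n(y)}(x-y)$ with the radiation condition; for $n\in C^{\alpha}$ its right side is $O(|x-y|^{\alpha-1})\in L^p_{loc}$ for some $p>3$, so interior elliptic regularity makes $R_y$ continuous (indeed Hölder, with modulus governed by that of $n$) near $y$, and together with $\Phi_{\kappa n(y)}(z)=\tfrac1{4\pi|z|}+\tfrac{i\kappa n(y)}{4\pi}+O(|z|)$ this yields $G(x,y)=\tfrac1{4\pi|x-y|}+\tfrac{i\kappa n(y)}{4\pi}+O(|x-y|^{\alpha})$ as $x\to y$. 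Taking $x=z^2_m$, $y=z^1_m$ with $|z^2_m-z^1_m|\sim a^t$ and solving for $n$ gives \eqref{formulas-for-n} (error $O(a^t)$ when $\alpha=1$; when $\alpha=0$ it is replaced by the modulus of continuity of $n$ at scale $a^t$, as in the footnote).

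The hard part will be the bookkeeping in part (2): one must verify that, under $\beta=1$, $0<h$, $h+2t<1$, both $\textbf{V}\textbf{V}^{\top}$ and $I_2+\textbf{C}\textbf{G}_0$ stay uniformly well conditioned as $a\to0$ — the latter despite $\textbf{C}\textbf{G}_0$ coupling the small scale $a^{1-h}$ of the capacitances to the large scale $a^{-t}$ of $G(z^1_m,z^2_m)$ — and that the remainder of \eqref{x oustdie1 D_m farmain-3} survives multiplication by the large factors $\textbf{C}^{-1}=O(a^{h-1})$ with exactly the exponents $1-h-2t$ and $h$; the other genuinely analytic point is pinning down the constant term and the Hölder-dependent error in the diagonal expansion of $G$ in part (3).
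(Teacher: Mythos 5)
Your treatment of parts (1) and (2) follows the paper's own route essentially verbatim: $M=1$ in \eqref{x oustdie1 D_m farmain-3} with $\mathbb{Q}=-C_{m,l}V^t(z^l_m,\theta)$ and division by $C_{m,l}=O(a^{1-h})$ for \eqref{total-fields}; the sign synchronization via the Lipschitz/continuity bound \eqref{upper-bound}; and for \eqref{Green-function-2} the $2\times2$ system written as $(I_2+\mathbf{C}\tilde{\mathbf{G}})\mathbb{Q}=-\mathbf{C}\mathbf{V}(\theta)$, the Neumann expansion $\mathbf{B}^{-1}=-\mathbf{C}+\mathbf{C}\tilde{\mathbf{G}}\mathbf{C}+O(a^{3-3h-2t})$, and the left/right pseudo-inversion by $(\mathbf{V}\mathbf{V}^\top)^{-1}\mathbf{V}$, with the same exponent bookkeeping $a^{2h-2}\cdot(a^{3-3h-2t}+a^{2-h})=a^{1-h-2t}+a^{h}$. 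The full-rank assertion for $\mathbf{V}$ that you defer to "choosing the directions" is exactly the content of the paper's Lemma \ref{Matrix-V-inver} (mixed reciprocity plus the Kirsch--Grinberg linear-independence argument), so that is an acknowledged rather than hidden gap. Your observation that conjugating by $\diag(\varepsilon_1,\varepsilon_2)$ with $\varepsilon_1=\varepsilon_2$ leaves the $(1,2)$ entry unchanged is a cleaner version of the paper's remark that the missed sign of $\mathbf{V}$ is irrelevant.

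Part (3), however, has a genuine gap at precisely the point where the paper invests its only substantive lemma. You write $G(x,y)=\Phi_{\kappa n(y)}(x-y)+R_y(x)$, show that the source term $\kappa^2(n^2(y)-n^2(\cdot))\Phi_{\kappa n(y)}$ is in $L^p_{loc}$ for some $p>3$, and conclude that $R_y$ is continuous near $y$ --- and then you silently use $R_y(x)=O(|x-y|^{\alpha})$. Continuity only gives $R_y(x)=R_y(y)+o(1)$, and nothing in your argument shows $R_y(y)=0$. If $R_y(y)\neq 0$, the expansion reads $G(x,y)=\tfrac{1}{4\pi|x-y|}+\tfrac{i\kappa n(y)}{4\pi}+R_y(y)+o(1)$ and the constant $R_y(y)$ enters \eqref{formulas-for-n} at order $O(1)$, destroying the reconstruction of $n(z_m)$. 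That this constant vanishes is not a regularity statement: from the Lippmann--Schwinger equation the regular part of $G(\cdot,y)$ at $y$ equals $\tfrac{i\kappa}{4\pi}+\kappa^2\int_{\Omega}(n^2-1)(t)\Phi_\kappa(y-t)G(t,y)\,dt$, and one must show this coincides with $\tfrac{i\kappa n(y)}{4\pi}$; elliptic regularity alone cannot see this. The paper's proof of \eqref{asymp-G} is entirely devoted to this point: it represents $H_j=G-\Phi_j$ over a small ball $B_{z_j,r}$, reduces to the boundary equation $(\tfrac12+K_r)H_j(\cdot,z_j)=O(r)$ on $\partial B_{z_j,r}$, and invokes an $r$-uniform bound on $(\tfrac12+K_r)^{-1}$ to conclude $H_j(x,z_j)=O(r)$ for $|x-z_j|=r$. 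Whatever one thinks of the delicacy of that double-layer argument (note that for the Laplace kernel $\tfrac12+K_r$ annihilates constants, so the uniform invertibility claimed there is exactly the crux), you cannot bypass it: you must either reproduce an argument of this type or prove directly that the diagonal value of the regular part is $\tfrac{i\kappa n(y)}{4\pi}$. As written, your part (3) does not establish \eqref{formulas-for-n}.
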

\bigskip

\begin{remark}\label{Remark} \textcolor{black}{We make the following observations}:
\begin{enumerate}
\item We observe that due to the error $O(a^t)$, for $m=1,.2,...,M$, we do not distinguish 
between $n(z^1_m)$ and $n(z^2_m)$ since $\vert z^1_m-z^2_m\vert \sim a^t$, as $a\rightarrow 0$ 
and hence $\vert n(z^1_m)-n(z^2_m)\vert =O(a^t)$ (in the best case $\alpha=1$). 
Hence these formulas provide the values of $n(z_m)$, for $m=1,.2,...,M$, where $z_m:=z^l_m$
for either $l=1$ or $l=2$.     

\item  From the formulas (\ref{total-fields}),\; (\ref{Green-function-2}) and (\ref{formulas-for-n}), we see that the error in reconstructing $n$ is of the order $a^{\min\{t, h, 1-h-2t\}}$ and hence we need $h>0$ and $0<t<\frac{1-h}{2}$. The ratio
between the radius $a$ and the minimum distance $d\approx a^t$ is of the order $\frac{a}{d}\approx 
a^{1-t}\; <<1$ as $a<<1$. This rate becomes significantly smaller when $t$ approaches 
$0$. This means that the small perturbations do not need to be very close to each other to apply the reconstruction formulas
in Theorem \ref{Main-results}. This makes sense and can be useful in practice since, due to the 
resolution limit, it is delicate to use focusing waves to create close small 
deformations. The price to pay is that we will have less accuracy in the reconstruction, see the 
formula (\ref{formulas-for-n}) for instance. 
Hence, one needs to find a compromise between the desired accuracy of the reconstruction and the permissible closeness of the small 
perturbations created by focusing waves. 
For instance taking $t=h=\frac{1}{4}$, the error in the reconstruction is of the order $a^{\frac{1}{4}}$ and the ratio $\frac{a}{d}$ is of the order $a^{\frac{3}{4}}$, as $a<<1$ .   

\item The importance of the parameter $t$, describing the closeness of the small inclusions, 
appears in the last step (i.e. the point 3. in Theorem \ref{Main-results}) to compute the values of $n$ using the singularity analysis
of \textcolor{black}{$G$}. Indeed, in (\ref{Green-function-2}), we can even take  $t=0$ with $h\in (0,\; 1)$. 
If we need to choose $t$ very small (for practical or experimental reasons), then we can also use other methods to compute $n(x)$
 from $G(x, z)$. For instance,
from the Lippmann-Schwinger equation $G(x,z)=\Phi(x, z)+\kappa^2 \int_{\Omega}(n^2-1)(t)
G(t, z)\Phi(t, x)dt,\; x, z \in \Omega$, we can compute $(n^2-1)(x)
G(x, z)$ by inverting an integral equation of the first kind having as a kernel $\Phi(x, z)$.
Knowing $G(x, z)$ and $(n^2-1)(x)G(x, z)$ for a sample of points $x, z \in \Omega$, 
we can reconstruct $n$ from the formula $(n^2-1)(x)=\frac{(n^2-1)(x)
G(x, z)}{G(x, z)}$. The denominator $G(\cdot, z)$ is not vanishing if we choose appropriately
 different values of the source points $z$ in $\Omega$.
 
 \item In this work, we use localized perturbations modeled by small inclusions with impedance boundary conditions. 
 However, the natural localized perturbations should occur on the coefficients of the model \cite{A-B-C-T-F:2008}, i.e. the index of refraction $n$ in our model. 
In this respect, using surface impedance type inclusions might be 
more attractive in the academic level than in practice unless we accept and use the following argument. In case where there is a large contrast between the medium and the background, and according to 
the Leontovich approximation, the contrast can be replaced by the surface impedance boundary condition. 
This means that we should use localized perturbations creating quite high contrasts.
\end{enumerate}
\end{remark}

\section{Proof of Theorem \ref{Main-results}}
\subsection{Step 1: Use of  single inclusion to extract the field $\pm (V^t(z^1_m, \theta_j), V^t(z^2_m, \theta_j)),\; z^1_m, z^2_m \in \Omega$}
We create one single inclusion located at a selected point $z^l_m$ in $\Omega$. This means that we take $M=1$ in the asymptotic expansion (\ref{x oustdie1 D_m farmain-2}), 
i.e.
$
U^\infty(z^l_m, \hat{x},\theta)-V^\infty(\hat{x},\theta)=V^t(z_m^l, -\hat{x})\mathbb{Q}+
O\left(
 a^{2-h}\right)
$
and $\mathbb{Q}=-C_{m, l}V^t(z_m^l, \theta)$. Hence
\begin{equation}\label{M-1}
U^\infty(z^l_m; -\theta_i,\theta_j)-V^\infty(-\theta_i,\theta_j)=-C_{m, l} V^t(z^l_m, \theta_i)V^t(z^l_m, \theta_j)+\;
O\left(
 a^{2-h}\right).
 \end{equation}
\bigskip
Using the backscattered data $U^\infty(z^l_m,-\theta_j,\theta_j)$ and $V^\infty(-\theta_j,\theta_j)$, and since 
$C_{m,l}:=-4\pi a^{1-h}(1-a^h)$ is known, from (\ref{M-1}) we derive the formula
\begin{equation}\label{one-inclusion}
C^{-1}_{m,l} \left [U^\infty(z^l_m; -\theta_j,\theta_j)-V^\infty(-\theta_j,\theta_j)\right]=-(V^t(z^l_m, \theta_j))^2+\;
O\left(
 a\right).
\end{equation}
Hence we can construct either $V^t(z^l_m, \theta_j)$ or $-V^t(z^l_m, \theta_j)$ since the complex argument will be computed to an additive $k\pi$ period. From (\ref{M-1}), we see that we can compute  $V^t(z^l_m, \theta_i)$, $i\neq j$, with same additive period. Then, we can construct either the vector $(V^t(z^l_m, \theta_j))^{N_0}_{j=1}$ or  the vector $-(V^t(z^l_m, \theta_j))^{N_0}_{j=1}$. 

Let us now show why if $z^1_m$ is close to $z^2_m$, then this sign should be the same for $l=1$ and $l=2$. 

We know that the total fields $\textcolor{black}{V}^t(\cdot, \theta_j), j:=1,..., N,$ are solutions to the Lippmann-Schwinger equations $
V^t(x, \theta_j)+\kappa^2\int_{\Omega}(n^2(z)-1)\Phi(x, z)V(z, \theta_j)dz=
u^i(x, \theta_j),\; x \in \mathbb{R}^3.$ Based on this equation and following the arguments in Lemma 2.2 of \cite{BA-DPC-KMSM:JMAA:2015}, we show that $V(x, \theta)$ is in $W^{1, \infty}(\Omega)$ with the bound 
\begin{equation}\label{regularity-L-S}
 \Vert V^t(\cdot, \theta)\Vert_{W^{1,\infty}{(\Omega)}} \leq C \Vert u^i(\cdot, \theta)\Vert_{H^2(\tilde{\Omega})} 
\end{equation}
where $C$ depends only on the upper bound of $n$ and $\tilde{\Omega}$, here $\Omega \subset \subset\tilde{\Omega}$. As $\theta$ varies in $\mathbb{S}^2$, the constant $ \Vert u^i(\cdot, \theta)\Vert_{H^2(\tilde{\Omega})}$ is uniformly bounded. We set $\tilde C:=C\sup_{\theta \in \mathbb{S}^2}
\Vert u^i(\cdot, \theta)\Vert_{H^2(\tilde{\Omega})}$. Hence we deduce that
\begin{equation}\label{upper-bound}
\vert V^t(z^1_m, \theta)-V^t(z^2_m, \theta)\vert\; \leq \; \tilde{C}\; \vert z^1_m-z^2_m\vert.
\end{equation}
Hence if the points $z^1_m$ and $z^2_m$ are chosen such that $\vert z^1_m-z^2_m\vert$ is small enough, then $V^t(z^1_m, \theta)$ and $V^t(z^2_m, \theta)$ will have the same sign\footnote{For $m:=1, 2, ..., M$ and $l:=1,2$ fixed, the value $V^t(z^l_m, \theta_j)$ can vanish for some $j$ but according to Lemma \ref{Matrix-V-inver} it cannot be true for ever $j:=1,..., N_0 $ where $N_0$ is large.}.


\subsection{Step 2: Use of multiple inclusions to extract the Green's function $G(z^1_m, z^2_m),\; z^1_m, z^2_m \in \Omega$}
In this case, we create two inclusions located close to each other, i.e. the two points $z^1_m$ and $z^2_m$, $m=1, ..., M$, 
in $\Omega$, are such that $\vert z^2_m-z^1_m \vert \sim a^t$, as $a \rightarrow 0$, with $t>0$. Let $m$ be fixed. 
\bigskip

From the asymptotic expansion (\ref{x oustdie1 D_m farmain-2}), we have 

\begin{equation}\label{M-finite}
W^\infty(z^l_m;\hat{x},\theta)-V^\infty(\hat{x},\theta)=\sum_{l=1}^{2}V^t(z^l_m, -\hat{x})\mathbb{Q}_m^l+
O\left(
 M\mathcal{C}a\right),
 \end{equation}
where 

\begin{eqnarray}\label{Q-M-finite}
 \mathbb{Q}_m^l +\sum_{\substack{l^{\prime}=1 \\ l^{\prime}\neq l}}^{2}C_{m,l} G(z^l_m,z_m^{l^{\prime}}) \mathbb{Q}_m^{l^{\prime}}&=&-C_{m,l}V^{t}(z^l_m, \theta).
\end{eqnarray}
Hence, we rewrite (\ref{M-finite}) as 
\begin{equation}\label{M-finite-1}
W^\infty(\hat{x},\theta)-V^\infty(\hat{x},\theta)=\mathbf{V}^\top(-\hat{x})\mathbf{B}^{-1}\mathbf{V}(\theta)
+\; O\left(
 M\mathcal{C}a\right),
 \end{equation}
where we use the vector $\mathbf{V}(\theta):=\left[V^t(z^1_m, \theta), V^t(z^2_m, \theta)\right]^\top$ 
and the $2 \times 2$ matrix $\mathbf{B}$ whose components are $\mathbf{B}_{l, {l^{\prime}}}:= -G(z^l_m,z^{l^{\prime}}_m)$ for $l \neq {l^{\prime}}$ and 
$\mathbf{B}_{l, l}:=-C_{m,l}^{-1}$ for $l=1, 2$. 
\bigskip

We use a number $N$ of directions of incidences (and hence of propagation directions) larger than 
the number of sampling points $z^1_m, z^2_m$, i.e. $2$ directions, where we want to evaluate the index of refraction. 
With this number at hand, the matrix 
\begin{equation}\label{Matrix-V}
\mathbf{V}:=\left[\mathbf{V}(\theta_1), \mathbf{V}(\theta_2),..., \mathbf{V}(\theta_{N})\right]
\end{equation}
has a full rank and hence $\mathbf{V}\mathbf{V}^\top$ is invertible. Precisely, we have the following lemma.
\begin{lemma}\label{Matrix-V-inver}
There exists $N_0 \in \mathbb{N}$ such that if the number $N$ of incident directions $\theta_j,~ j=1,..., N$ is larger than $N_0$, 
i.e. $N \geq N_0$, then the vectors $\mathbf{V}^t_n(\theta_j):=\left[V^t(z^1_m, \theta_j), V^t(z^2_m, \theta_j\right]^\top$, $j=1, ..., N$ 
are linearly independent.
\end{lemma}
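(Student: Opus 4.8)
The plan is to reduce the claim to the linear independence of the two angular functions $\theta\mapsto V^t(z^1_m,\theta)$ and $\theta\mapsto V^t(z^2_m,\theta)$ on $\mathbb S^2$, and then to prove that independence by passing to the Green function of the unperturbed medium. Observe first that the vectors $\mathbf V^t_n(\theta_j)=(V^t(z^1_m,\theta_j),V^t(z^2_m,\theta_j))^\top$ live in $\mathbb C^2$, so the statement should be read as: for $N\ge N_0$ and a suitable (in fact generic) choice of the directions, the $2\times N$ matrix $\mathbf V$ has full rank $2$, i.e. $\mathbf V\mathbf V^\top$ is invertible, which is exactly what is used in the sequel. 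The matrix has rank $<2$ for \emph{every} choice of directions precisely when all the planar vectors $\{(V^t(z^1_m,\theta),V^t(z^2_m,\theta))^\top:\theta\in\mathbb S^2\}$ lie in one common line of $\mathbb C^2$, i.e. when there is $(\alpha,\beta)\ne(0,0)$ with $\alpha\,V^t(z^1_m,\theta)+\beta\,V^t(z^2_m,\theta)=0$ for all $\theta\in\mathbb S^2$. Since $\theta\mapsto V^t(\cdot,\theta)$ solves a Lippmann--Schwinger equation with a $\theta$-independent operator and entire right-hand side $\theta\mapsto e^{i\kappa x\cdot\theta}$, it is real-analytic in $\theta$; hence once such a relation is excluded the two functions are linearly independent, two suitably chosen directions already give a rank-$2$ matrix (so one may take $N_0=2$), and the exceptional direction-tuples form a set of measure zero in $(\mathbb S^2)^N$. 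So it suffices to show that no such $(\alpha,\beta)$ exists when $z^1_m\ne z^2_m$.

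To prove this I would argue by contradiction. By the mixed reciprocity relation (see \cite{C-K:1998}) the plane-wave total field coincides with the far-field of a point source, $V^t(z,-\hat x)=G_\infty(\hat x,z)$, where $G_\infty(\cdot,z)$ is the far-field pattern of $G(\cdot,z)=\Phi(\cdot,z)+G^s(\cdot,z)$ and $G$ is the Green function of the unperturbed scattering problem. Thus the assumed relation becomes $\alpha\,G_\infty(\hat x,z^1_m)+\beta\,G_\infty(\hat x,z^2_m)=0$ for all $\hat x\in\mathbb S^2$. Consider $W:=\alpha\,G(\cdot,z^1_m)+\beta\,G(\cdot,z^2_m)$: it is radiating, it solves $(\Delta+\kappa^2 n^2)W=0$ in $\mathbb R^3\setminus\{z^1_m,z^2_m\}$, and its far-field pattern vanishes. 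By Rellich's lemma $W$ vanishes outside a large ball, and then the unique continuation principle for $\Delta+\kappa^2 n^2$ --- valid since $n\in L^\infty$ is real --- on the connected open set $\mathbb R^3\setminus\{z^1_m,z^2_m\}$ forces $W\equiv0$ there.

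It then remains to bring in the singularity of $G$. Because $n-1\in L^\infty$, the scattered part solves $(\Delta+\kappa^2 n^2)G^s(\cdot,z)=\kappa^2(1-n^2)\Phi(\cdot,z)\in L^p_{\mathrm{loc}}$ for any $p<3$, hence $G^s(\cdot,z)\in W^{2,p}_{\mathrm{loc}}\hookrightarrow C^0$ for some $p\in(3/2,3)$; consequently $G(x,z)=\frac1{4\pi|x-z|}+O(1)$ as $x\to z$, whereas $G(x,z')$ stays bounded near $x=z$ whenever $z'\ne z$. If $\alpha\ne0$, letting $x\to z^1_m$ in $W\equiv0$ makes $\alpha\,G(x,z^1_m)$ blow up against a bounded $\beta\,G(x,z^2_m)$, a contradiction; if $\beta\ne0$ one lets $x\to z^2_m$ instead. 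In all cases $(\alpha,\beta)=(0,0)$, contradicting the assumption, which finishes the argument. The main obstacle is exactly the middle step, ``vanishing far field $\Rightarrow$ $W\equiv0$ off its poles'': it relies on Rellich's lemma combined with unique continuation for $\Delta+\kappa^2 n^2$ with a merely bounded (or $C^\alpha$) index, together with the short but necessary local analysis of $G$ near its pole; the reading of ``for $N\ge N_0$ directions'' as ``for a suitable/generic choice'' and, if $\mathbb R^3\setminus\overline\Omega$ is disconnected, carrying out the exterior step in its unbounded component, are minor points to be settled along the way.
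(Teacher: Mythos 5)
Your proof is correct and follows essentially the same route as the paper: the paper's proof invokes mixed reciprocity to turn $V^t(z^l_m,\theta_j)$ into the far-field pattern of the point source $G(\cdot,z^l_m)$ and then cites the Kirsch--Grinberg linear-independence argument, which is precisely the Rellich/unique-continuation/singularity-of-$G$ argument you carry out in detail. Your preliminary remarks --- reading the conclusion as ``$\mathbf{V}$ has full rank $2$'' and noting that the directions must be suitably (generically) chosen --- are faithful clarifications of what the lemma actually asserts and of what is used in the sequel.
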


\begin{proof}
We observe, by the mixed reciprocity relations, that $\textcolor{black}{V}^t(z^l_m, \theta_j)=\textcolor{black}{G^\infty}(-\theta_j, z^l_m)$, where ${G^\infty}(-\theta_j, z^l_m)$ is the far-field 
in the direction $-\theta_j$ of the point source $G(x, z^l_m)$ supported at the source point $z^l_m$. We recall that the point source $G(x, z^l_m)$ 
is the Green function of the model (\ref{acimpoenetrable-2}). With this relation at hand, the proof of the lemma follows the same arguments in \cite{K-G} 
where it is done for the vectors $(e^{-i\kappa \theta_j\cdot z^l_m})_{l=1}^{2}$ and $j=1,..., N$, with $N$ large enough. We omit the details. 
Observe that $V^t(z^l_m, \theta_j)=e^{i\kappa \theta_j\cdot z^l_m}$ if the index of refraction $n$ is equal to unity in $\mathbb{R}^3$.
\end{proof}

\begin{remark}
In practice, one may take $N=N_0=2$. This can be easily justified for the case when the contrast of the index of refraction, $(n-1)$, is small. To see it we set, keeping the same notation as above, $\mathbf{V}:=\left[\mathbf{V}(\theta_1), \mathbf{V}(\theta_2)\right]$ where $\mathbf{V}(\theta_j):=\left[V^t(z^1_m, \theta), V^t(z^2_m, \theta_j)\right]^\top$, $j=1, 2$. Correspondingly, we set
$\mathbf{V}_1(\theta_j):=\left[e^{i\kappa z_m^1\cdot \theta_j}, e^{i\kappa z^2_m\cdot \theta_j}\right]^\top$, $j=1,2$ and $\mathbf{V}_1:=\left[\mathbf{V}_1(\theta_1), \mathbf{V}_1(\theta_2))\right]$. A simple computation shows that the determinant of $\mathbf{V}_1$ is $e^{i\kappa(\theta_1+\theta_2){\cdot}z^1_m}[e^{i \kappa (z^2_m-z^1_m)\cdot\theta_2}-e^{i \kappa (z^2_m-z^1_m)\cdot\theta_2}]\neq 0$ if $\theta_1\neq \theta_2$. Now, we observe, by the Lippmann-Schwinger equation for instance, that $\mathbf{V}=\mathbf{V}_1+ O(\vert n-1\vert)$. Hence if the contrast $n-1$ is small enough then the determinant of $\mathbf{V}$ is not vanishing.  

\end{remark}

Hence, by measuring the data $\mathbf{W}^\infty$ and $\mathbf{V}^\infty$  using the formulas
\begin{equation}\label{matrix-data-1}
(\mathbf{W}^\infty)_{i,j}:=(W^\infty(z^l_m;-\theta_i,\theta_j)), \; i, j=1, ...,N_0
\end{equation} 
and
\begin{equation}\label{matrix-data-2}
(\mathbf{V}^\infty)_{i,j}:=(V^\infty(-\theta_i,\theta_j)), \; i, j=1, ...,N_0,
\end{equation} 
we obtain the formula 
\begin{equation}\label{Matrix-B-Q}
\mathbf{W}^{\infty}-\mathbf{V}^{\infty}=\mathbf{V}^\top \textbf{B}^{-1}\mathbf{V}\;
+ O\left(a^{2-h}\right).
\end{equation}
Observe that $\textbf{B}^{-1}=-\left ( I+\textbf{C} \tilde{\textbf{G}}\right)^{-1}\textbf{C}$ where $\textbf{C}:=diag\left(C_{m,l}\right)$ and 
$\left(\tilde{\textbf{G}} \right)_{l,l}:=0, l=1, 2$ and $\left(\tilde{\textbf{G}} \right)_{l,l^\prime}:=G(z_m^l,z^{l^\prime}_m), l \neq l^\prime$. 
Hence we can write 
\begin{equation}\label{First-order-appr}
\textbf{B}^{-1}=-\textbf{C} +\textbf{C} \tilde{\textbf{G}} \textbf{C} +O(\max_{l}\{C_{m,l}\}\Vert \textbf{C} \textbf{G}_\kappa\Vert^2_2)
\end{equation} 
  where $\Vert \Vert_2$ is the $l_2$-norm for matrices. 
 
 We recall that we have chosen the points $z^1_m$ and $z^2_m$, in $\Omega$, such that $\vert z^2_m-z^1_m \vert \sim a^t$ with $t\in (0, 1)$, as $a \rightarrow 0$. With such a choice, we see that

\begin{equation}\label{First-order-appr-1}
\textbf{B}^{-1}=-\textbf{C} +\textbf{C} \tilde{\textbf{G}}\textbf{C}  +O\left( a^{3-3h-2t} \right) 
\end{equation}
as $O(\max_{l}\{C_{m,l}\}\Vert \textbf{C} \tilde{\textbf{G}}\Vert^2_2)=O(a^{3-3h-2t})$.

Finally, from the matrix $\textbf{B}$ we derive the values of the Green function at the 
sample points $z_m^l,\:l=1,2$, in $\Omega$:
\begin{equation}\label{G}
G(z^1_m,z^2_m).
\end{equation}

Indeed, combining (\ref{Matrix-B-Q}) and (\ref{First-order-appr-1}), we deduce that 
\begin{equation}\label{approximation}
\mathbf{W}^{\infty}-\mathbf{V}^{\infty}=-\mathbf{V}^\top \textbf{C}\mathbf{V}\; 
+ \mathbf{V}^\top \textbf{C} \tilde{\textbf{G}} \textbf{C} \mathbf{V}\;+~ O\left(
 a^{3-3h-2t}+a^{2-h}\right)
\end{equation}
where $\mathbf{V}^\top \textbf{C}\mathbf{V}\sim a^{1-h}$ and $ \mathbf{V}^\top 
\textbf{C} \tilde{\textbf{G}} \textbf{C} \mathbf{V} \sim a^{2-2h-t}$. Remember that from step. $1$, we compute $\pm \mathbf{V}$ (i.e. we do know the correct sign), but as in (\ref{approximation}) we multiply the two sides by $\mathbf{V}$ and $\mathbf{V}^\top$, then the missed sign of $\mathbf{V}$ is irrelevant.
\bigskip
 
The approximation (\ref{approximation}) will make sense, i.e. the error term is dominated by 
$-\mathbf{V}^\top \textbf{C}\mathbf{V}\; +  \mathbf{V}^\top \textbf{C} \tilde{\textbf{G}} \textbf{C} \mathbf{V}$, 
if $3-3h-2t>2-2h -t$ and $2-h>2-2h -t$, i.e. $0<t+h<1$. In addition, 
$\mathbf{V}^\top \textbf{C}  \tilde{\textbf{G}}\textbf{C}  
\mathbf{V} =O(\mathbf{V}^\top \textbf{C}\mathbf{V})$ if $2-2h -t\geq 1-h$, i.e. $t+h\leq 1$. 
Hence the approximation (\ref{approximation}) makes sense if $0<t+h <1$. This last condition is satisfied since 
we assumed that $0<h$ and $h+2t<1$.

From Step 1, we showed already how to extract the matrices $\pm \mathbf{V}$, hence we can 
compute $\mathbf{W}^{\infty}-\mathbf{V}^{\infty}-\mathbf{V}^\top \textbf{C}\mathbf{V}\;$. 
Since the matrix $\textbf{C}$ is diagonal, with non zero entries, and the matrix 
$\textbf{V}\textbf{V}^\top$ is invertible, 
as $\textbf{V}$ is full rank matrix, then we can recover the matrix 
$\tilde{\textbf{G}}$ as
\begin{equation}\label{reconvering-G-kappa}
\textbf{C}^{-1}(\textbf{V}\textbf{V}^\top)^{-1} \textbf{V}\left[\mathbf{W}^{\infty}-\mathbf{V}^{\infty}\right]\textbf{V}^\top(\textbf{V}\textbf{V}^\top)^{-1}\textbf{C}^{-1}
+ \textbf{C}^{-1}=\tilde{\textbf{G}}+\;~ O(a^{2h-2})\;O\left(
 a^{3-3h-2t}+a^{2-h}\right)
\end{equation}
or 
\begin{equation}\label{reconvering-G-kappa-1}
\textbf{C}^{-1}(\textbf{V}\textbf{V}^\top)^{-1}\textbf{V}\left[\mathbf{W}^{\infty} -\mathbf{V}^{\infty}\right]\textbf{V}^\top(\textbf{V}\textbf{V}^\top)^{-1}\textbf{C}^{-1}
+ \textbf{C}^{-1}=\tilde{\textbf{G}}+\;~ O\left(
 a^{1-h-2t}+a^{h}\right).
\end{equation}
We see that $a^{h}=o(\tilde{\textbf{G}})$ if $h>-t$,i.e. $t+h>0$ and $a^{1-h-2t}=o(\tilde{\textbf{G}})$ 
if $1-h-2t>-t$, i.e. $t+h<1$. Hence (\ref{reconvering-G-kappa-1}) makes sense if $0< t+h <1$. 
As mentioned above, this last condition is satisfied since we assumed that $0<h$ and $h+2t<1$.

Finally, we set
\begin{equation}\label{dominant-matrix}
G(z^1_m, z^2_m):=(\tilde{\textbf{G}})_{1, 2},\;~ 
\end{equation}
as the reconstructed values of the Green function $G$ at the sampling points \textcolor{black}{$(z^1_m, z^2_m),\; m=1, 2,...,M$}.

\subsection{Step 3: Extraction of the index of refraction from the Green function}
We start with the following lemma

\begin{lemma} We have the asymptotic expansion:
\begin{equation}\label{asymp-G}
G (x, z_j)= \frac{e^{i\kappa n(z_j)\vert x-z_j\vert}}{4\pi \vert x-z_j\vert}+O(\vert x-z_j\vert),\; \mbox{ as } \vert x-z_j\vert \rightarrow 0.
\end{equation}
\end{lemma}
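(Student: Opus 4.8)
The plan is to derive the short-distance expansion of the Green function $G(x,z_j)$ from the Lippmann--Schwinger representation
\[
G(x,z_j)=\Phi(x,z_j)+\kappa^2\int_\Omega (n^2(y)-1)\,\Phi(x,y)\,G(y,z_j)\,dy,
\]
where $\Phi(x,y)=\dfrac{e^{i\kappa|x-y|}}{4\pi|x-y|}$ is the free outgoing fundamental solution. The singular part of $G(x,z_j)$ as $x\to z_j$ will come entirely from the leading term $\Phi(x,z_j)$ together with the self-interaction of the volume integral near $y=z_j$; the rest of the integral is continuous (indeed $C^\alpha$) in $x$ near $z_j$ and contributes only an $O(1)$ term, which one then refines to $O(|x-z_j|)$. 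The key mechanism is that the local behaviour of $G$ near the diagonal is governed by the equation $(\Delta+\kappa^2 n^2(z_j))G(\cdot,z_j)=-\delta_{z_j}$ with the coefficient frozen at $z_j$, whose outgoing fundamental solution is exactly $\dfrac{e^{i\kappa n(z_j)|x-z_j|}}{4\pi|x-z_j|}$.

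Concretely, I would proceed as follows. First, write $G(x,z_j)=\Phi_{n(z_j)}(x,z_j)+R(x)$, where $\Phi_{n(z_j)}(x,z_j):=\dfrac{e^{i\kappa n(z_j)|x-z_j|}}{4\pi|x-z_j|}$ is the outgoing fundamental solution of $\Delta+\kappa^2 n^2(z_j)$, and show $R$ is H\"older continuous near $z_j$ with $R(z_j)$ to be identified. Apply $\Delta+\kappa^2 n^2(x)$ to $G(\cdot,z_j)$: since $(\Delta+\kappa^2 n^2(x))G(\cdot,z_j)=-\delta_{z_j}$ and $(\Delta+\kappa^2 n^2(z_j))\Phi_{n(z_j)}(\cdot,z_j)=-\delta_{z_j}$, subtraction gives $(\Delta+\kappa^2 n^2(x))R=-\kappa^2(n^2(x)-n^2(z_j))\Phi_{n(z_j)}(x,z_j)$ in a neighbourhood of $z_j$. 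The right-hand side is, using $n\in C^\alpha$, of size $O(|x-z_j|^{\alpha})\cdot O(|x-z_j|^{-1})=O(|x-z_j|^{\alpha-1})$, hence in $L^p_{loc}$ for $p<3/(1-\alpha)$; elliptic (or Newtonian-potential) regularity then yields $R\in C^{1,\gamma}_{loc}$, in particular $R(x)=R(z_j)+O(|x-z_j|)$ near $z_j$, while $\Phi_{n(z_j)}(x,z_j)=\dfrac{1}{4\pi|x-z_j|}+\dfrac{i\kappa n(z_j)}{4\pi}+O(|x-z_j|)$.

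To pin down $R(z_j)$ I would compare $G$ with the free Green function $\Phi=\Phi_1$ via the Lippmann--Schwinger integral: the difference $G(x,z_j)-\Phi(x,z_j)=\kappa^2\int_\Omega(n^2(y)-1)\Phi(x,y)G(y,z_j)\,dy$ has a weakly singular (hence continuous in $x$) integrand away from the diagonal contribution, and the near-diagonal analysis above shows the combined singular parts organise into $\Phi_{n(z_j)}(x,z_j)$ rather than $\Phi(x,z_j)$, so the constant term in the expansion is forced to be the constant term $\dfrac{i\kappa n(z_j)}{4\pi}$ of $\Phi_{n(z_j)}$; collecting,
\[
G(x,z_j)=\Phi_{n(z_j)}(x,z_j)+O(|x-z_j|)=\frac{e^{i\kappa n(z_j)|x-z_j|}}{4\pi|x-z_j|}+O(|x-z_j|),\qquad |x-z_j|\to 0,
\]
which is \eqref{asymp-G}. (When $\alpha$ is only assumed to be $0$, i.e.\ $n$ merely continuous, the same argument works with the remainder degraded from $O(|x-z_j|)$ to $o(1)$, matching the footnote in the statement of Theorem~\ref{Main-results}.)

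The main obstacle is the regularity bookkeeping near the diagonal: one must show rigorously that subtracting the frozen-coefficient fundamental solution removes \emph{both} the $|x-z_j|^{-1}$ singularity \emph{and} the constant term, leaving a genuinely $O(|x-z_j|)$ remainder. This requires the H\"older continuity of $n$ in an essential way — it is exactly what makes $(n^2(x)-n^2(z_j))\Phi_{n(z_j)}(x,z_j)$ integrable enough for the Newtonian potential to gain a full derivative — and care is needed because $R$ itself is defined only locally (near $z_j$), so one works with a cutoff of $\Phi_{n(z_j)}$ and absorbs the commutator terms, which are smooth, into the $O(|x-z_j|)$ error. Everything else (the expansion of $\Phi_{n(z_j)}$, the weak singularity estimates for the Lippmann--Schwinger remainder) is routine.
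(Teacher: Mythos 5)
Your decomposition $G=\Phi_{n(z_j)}+R$ with $(\Delta+\kappa^2n^2(x))R=\kappa^2\bigl(n^2(z_j)-n^2(x)\bigr)\Phi_{n(z_j)}$ and the bootstrap giving $R\in C^{1,\gamma}$ near $z_j$ are sound, and up to that point you are essentially doing what the paper does (it writes the same remainder $H_j=G-\Phi_j$ with the same right-hand side, merely grouping the frozen coefficient on the left). But this only yields
\begin{equation*}
G(x,z_j)=\frac{e^{i\kappa n(z_j)\vert x-z_j\vert}}{4\pi\vert x-z_j\vert}+R(z_j)+O(\vert x-z_j\vert),
\end{equation*}
and the entire content of the lemma is the claim $R(z_j)=0$. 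Your justification of that claim --- ``the combined singular parts organise into $\Phi_{n(z_j)}$, so the constant term is forced to be $i\kappa n(z_j)/4\pi$'' --- is a non sequitur: matching singular parts only fixes the coefficient of $\vert x-z_j\vert^{-1}$, which equals $1/4\pi$ for $\Phi$, for $\Phi_{n(z_j)}$ and for $G$ alike and therefore carries no information about $n(z_j)$; it says nothing about the zeroth-order term. From the Lippmann--Schwinger equation the constant term of $G(x,z_j)-\frac{1}{4\pi\vert x-z_j\vert}$ is $\frac{i\kappa}{4\pi}+\kappa^2\int_\Omega(n^2(y)-1)\Phi(z_j,y)G(y,z_j)\,dy$, a quantity depending on $n$ globally. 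A Born-type check makes the gap concrete: take $n^2=1+\epsilon$ on a ball $B(z_j,R)$ (smoothed near the boundary) with $\epsilon$ small; then this constant is $\frac{i\kappa}{4\pi}-\frac{i\kappa\epsilon}{8\pi}\bigl(e^{2i\kappa R}-1\bigr)+O(\epsilon^2)$, whereas $\frac{i\kappa n(z_j)}{4\pi}=\frac{i\kappa}{4\pi}+\frac{i\kappa\epsilon}{8\pi}+O(\epsilon^2)$; these agree only if $e^{2i\kappa R}=0$. So $R(z_j)=0$ cannot be obtained by a soft argument --- this step is the genuine gap in your proposal, not a bookkeeping issue.

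For comparison, the paper does not attempt your matching argument: it writes Green's representation for $H_j$ on the shrinking ball $B_{z_j,r}$, obtains the boundary integral equation $(\tfrac12+K_r)H_j(\cdot,z_j)=O(r)$ on $\partial B_{z_j,r}$, and invokes a uniform bound on $(\tfrac12+K_r)^{-1}$ to conclude $H_j=O(r)$. Be aware that this is exactly the same pressure point in disguise: as $r\to0$ the operator $K_r$ tends to the Laplace double-layer operator, for which $(\tfrac12+K)1=0$, so $\tfrac12+K_r$ applied to the constant $H_j(z_j,z_j)$ is only $O(r^2)$ and the equation $(\tfrac12+K_r)H_j=O(r)$ does not control the constant part of $H_j$ uniformly in $r$. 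In short, determining the constant term is the one delicate point of this lemma; if you want a complete argument along your lines you must actually compute $R(z_j)$ from the Lippmann--Schwinger representation rather than assert that it vanishes, and you should expect the answer to involve $n$ nonlocally rather than only through $n(z_j)$.
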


\begin{proof}
We know that $(\Delta +\kappa^2 n^2(x))G=-\delta,$ in $\mathbb{R}^3$ and 
$\Phi_{ j}(x, z):=\frac{e^{i\kappa n(z_j)\vert x-z\vert}}{4\pi \vert x-z\vert}$ 
satisfies $(\Delta +\kappa^2 n^2(z_j))\Phi=-\delta,$ in $\mathbb{R}^3$, 
where both $G$ and $\Phi_{ j}$ satisfy the Sommerfeld radiation condition. 
Then $H_{ j}(x, z):=(G-\Phi_{ j})(x, z)$ satisfies the same radiation 
condition and 
\begin{equation}\label{difference}
(\Delta +\kappa^2 n^2(z_j))H_{ j}=\kappa^2 (n^2(z_j)-n^2(x))G, \mbox{ in } \mathbb{R}^3. 
\end{equation}
Multiplying both sides of (\ref{difference}) by $\Phi_{ j}$ and integrating over 
a bounded and smooth domain $B$ we obtain:

\begin{eqnarray}\label{representation-1}
H_{ j}(x, z)&=& -\kappa^2\int_{B}(n^2(z_j)-n^2(t))\Phi_{ j}(t, x)G(t, z)dt \\
&&-\int_{\partial B} H_{ j}(t, z)\partial_{\nu(t)}\Phi_{ j}(t, x)ds(t) 
+\int_{\partial B} \partial_{\nu(t)} H_{ j}(t, z)\Phi_{ j}(t, x)ds(t),\nonumber
\end{eqnarray}
and then 
\begin{eqnarray}\label{representation-2}
\nabla_{x} H_{ j}(x, z_j)&=& -\kappa^2\int_{B}(n^2(z_j)-n^2(t))\Phi_{ j}(t, x)
\nabla_{x}G(t, z_j)dt \\
&&-\int_{\partial B} H_{ j}(t, z_j)\nabla_{x}\partial_{\nu(t)}\Phi_{ j}(t, x)ds(t)  
+ \int_{\partial B} \partial_{\nu(t)} H_{ j}(t, z_j)\nabla_x \Phi_{ j}(t, x)ds(t).\nonumber
\end{eqnarray}
Here, we choose $B$ such that $\Omega \subset \subset B$.  Since $(n^2(z_j)-n^2(t))
\nabla G(t, z_j)=O(\vert t-z_j\vert^{-2 +\alpha}), 
t \in B$, as the singularity of $\nabla_{x}G(t, -z_j)$ is of the order 
$\vert t-z_j\vert^{-2}$, and $n^2(z_j)-n^2(t) =O(\vert t-z_j\vert^\alpha)$, since $n$ is of
class $C^\alpha$, we deduce that $\int_{B}(n^2(z_j)-n^2(t))\Phi_{ j}(t, x)
\nabla_{x}G(t, z_j)dt =O(\int_B \vert t-z_j\vert^{-2+\alpha}\vert t-x\vert^{-1}dt)
=O(1),\; x \in B$ since $\alpha >0$, see (\cite{Val}, Lemma 4.1), for instance, for the such integrals o
f of singular functions. The two last integrals appearing in (\ref{representation-2}) are of the order $O(1)$ for $x \in \Omega \subset \subset B$.

Now, let $B_{z_j, r}$ be the ball of center $z_j$ and radius $r,\; r <<1$. 
We use (\ref{representation-1}) applied to $B_{z_j, r}$ instead of $B$:
\begin{eqnarray}\label{representation-3}
H_{ j}(x, z)&=&-\kappa^2\int_{B_{z_j, r}}(n^2(z_j)-n^2(t))\Phi_{ j}(t, x)G(t, z)dt \\
&&-\int_{\partial B_{z_j, r}} H_{ j}(t, z)\partial_{\nu(t)}\Phi_{ j}(t, x)ds(t) 
+  \int_{\partial B_{z_j, r}} \partial_{\nu(t)} H_{ j}(t, z)\Phi_{ j}(t, x)ds(t)\nonumber
\end{eqnarray}
for $x \in B_{z_j, r}$. We see that
\begin{eqnarray*}
\int_{B_{z_j, r}}(n^2(z_j)-n^2(t))\Phi(t, x)G(t, z_j)dt&=& 
O\Big ( \int_{B_{z_j, r}} \vert t-z_j \vert^{-1+\alpha} \vert t-x\vert^{-1} ds(t)\Big)\,
,\mbox{ for } x \in B_{z_j,  r}
\end{eqnarray*}
which we can rewrite as 
\begin{eqnarray*}
\int_{B_{z_j, r}}(n^2(z_j)-n^2(t))\Phi(t, x)G(t, z_j)dt&=& 
O\Big ( (\int_{B_{z_j, r}} \vert t-z_j \vert^{-2+2\alpha} ds(t))^{\frac{1}{2}}
(\int_{B_{x, 2r}} \vert t-x \vert^{-2} ds(t))^{\frac{1}{2}}\Big)\,
,
\end{eqnarray*}
for $x \in B_{z_j, r}$. Hence
\begin{eqnarray*}
\int_{B_{z_j, r}}(n^2(z_j)-n^2(t))\Phi(t, x)G(t, z_j)dt&=& 
O(r^{1+\alpha})\,
, \mbox{ for } x \in B_{z_j, r}.
\end{eqnarray*}

Since $\nabla H_{ j}(t, z_j)$ is bounded for $t\in \Omega$, then
\begin{eqnarray*}
\int_{\partial B_{z_j, r}} \partial_{\nu(t)} H_{ j}(t, z_j)\Phi_{ j}(t, x)ds(t)&=&O\Big(\int_{\partial B_{z_j, r}}  \vert \Phi_{ j}(x, t)\vert ds(t)\Big)=O(r)
\end{eqnarray*}
for $x \in \Omega$. Hence, we can write (\ref{representation-3}) as 
\begin{equation}\label{representation-4}
H_{ j}(x, z_j) +\int_{\partial B_{z_j, r}} H_{ j}(t, z_j)\partial_{\nu(t)}
\Phi_{ j}(t, x)ds(t) = O(r), \mbox{ for } x \in B_{z_j, r}.
\end{equation}
Taking the trace in (\ref{representation-4}) to $\partial B_{z_j, r}$ and using the trace 
of the double layer potential $\int_{\partial B_{z_j, r}} H_{ j}(t, z_j)\partial_{\nu(t)}
\Phi_{ j}(t, x)ds(t)=K_r(H_{ j}(\cdot, z_j))(x) -\frac{1}{2} H_{ j}(x, z_j),
\;~ x \in \partial B_{z_j, r}$, where $K_r$ denotes the double layer operator, we derive the equation

\begin{equation}\label{representation-5}
\left (\frac{1}{2} +K_r \right )(H_{ j}(\cdot, z_j))= O(r), \mbox{ for } x \in \partial B_{z_j, r}.
\end{equation}
Then 
$$
\Vert H_{ j}(\cdot, z_j)\Vert_{L_2(\partial B_{z_j, r})}=O\left (r\; \Vert(\frac{1}{2} +K_r )^{-1}\Vert_{\mathcal{L}( L_2(\partial B_{z_j, r}),\; L_2(\partial B_{z_j, r}))}\right).
$$
We have the following estimate from \cite{DPC-SM13}:
$$
\Vert (\frac{1}{2} +K_r )^{-1})\Vert_{\mathcal{L}( L_2(\partial B_{z_j, r}),\; 
L_2(\partial B_{z_j, r}))} \leq \Vert(\frac{1}{2} +K_1 )^{-1})\Vert_{\mathcal{L}
( L_2(\partial B),\; L_2(\partial B))}
$$
where $B:=B_{O, 1}$. Hence
\begin{equation}\label{abs-W}
\Vert H_{ j}(\cdot, z_j)\Vert_{L_2(\partial B_{z_j, r})}=O(r).
\end{equation}
Going back to (\ref{representation-4}), we estimate
$$
\vert H_{ j}(x, z_j)\vert \leq \int_{\partial B_{z_j, r}} \vert H_{ j}(t, z_j)\vert \vert \partial_{\nu(t)}\Phi_{ j}(x, t) \vert ds(t) +O(r)
$$
and then 
$$
\vert H_{ j}(x, z_j)\vert \leq \Vert H_{ j}(\cdot, z_j)\Vert_{L^2(\partial B_{z_j, r})} \vert \Vert \partial_{\nu(t)}\Phi_{ j}(\cdot, t)\Vert_{L^2(\partial B_{z_j, r})} +O(r).
$$
From (\ref{abs-W}) and the fact that 
$\Vert \partial_{\nu(t)}\Phi_{ j}(\cdot, t)\Vert_{L^2(\partial B_{z_j, r})} =
O(\int_{\partial B(z_j, r)}\vert t-z_j\vert^{-2}dt)^{\frac{1}{2}}=O(1)$, we derive the estimate
$$
H_{ j}(x, z_j)=O(r)
$$
for $x \in \overline{B_{z_j, r}}$ and in particular for $x$ such that $\vert x-z_j\vert =r$.  
Hence, from the definition of $H_{ j}(x, j)$, we deduce that
$$
G (x, z_j)= \frac{e^{i\kappa n(z_j)\vert x-z_j\vert}}{4\pi \vert x-z_j\vert}+O(\vert x-z_j\vert),\; \mbox{ as } \vert x-z_j\vert \rightarrow 0.
$$
\end{proof}

From (\ref{asymp-G}), we deduce that,
\begin{equation}\label{Final-formula}
G(x, z_m)=\frac{1}{4\pi \vert x-z_m\vert}+ i\;\kappa\; n(z_m) \; + O(\vert x-z_m\vert),\;~~ \mbox{ as } \vert x-z_m\vert \rightarrow 0
\end{equation}
and then
\begin{equation}\label{Final-formula-1}
n(z_m)=\lim_{x\rightarrow z_m}\left[\frac{1}{i\kappa} G(x, z_m)-\frac{1}{i\kappa\;4\pi\vert x-z_m\vert} \right ].
\end{equation} 
The formula (\ref{Final-formula-1}) can be used for $x:=z_m^2$ and $z_m:=z^1_m$, where $z_m^1$ and $z^2_m$, \textcolor{black}{$m=1,2,..., M$} are the sampling points added to the small inclusions,
\begin{equation}\label{Final-formula-2}
n(z_m)=\left[\frac{1}{i\kappa} G(z^2_m, z^1_m)-\frac{1}{i\kappa\;4\pi\vert z^2_m-z^1_m\vert} \right ] +O(a^t),\;~ t>0.
\end{equation}

\section{The stability issue}

The object of this section is to analyze how the formulas are accurate when the measured far-fields are contaminated with noise and when the locations of the small perturbations
$z^l_m, m:=1, ..., M \mbox{ and } l=1,2$, are slightly shifted when passing from step 2 to step 3 in collecting the measured far-fields. Precisely, we will derive the regimes on the amplitude of the noise and the length of the shifts under which our formulas are still applicable.

We recall the main formulas \eqref{reconvering-G-kappa-1}, \eqref{dominant-matrix} and \eqref{Final-formula-2};

\begin{equation}\label{reconvering-G-kappa-1--}
\textbf{C}^{-1}(\textbf{V}\textbf{V}^\top)^{-1}\textbf{V}\left[\mathbf{W}^{\infty} -\mathbf{V}^{\infty}\right]\textbf{V}^\top(\textbf{V}\textbf{V}^\top)^{-1}\textbf{C}^{-1}
+ \textbf{C}^{-1}=\tilde{\textbf{G}}+\;~ O\left(
 a^{1-h-2t}+a^{h}\right).
\end{equation} 

\begin{equation}\label{dominant-matrix--}
G(z^1_m, z^2_m):=(\tilde{\textbf{G}})_{1,2},
\end{equation}

and
\begin{equation}\label{Final-formula-2-r}
n(z_m)=\left[\frac{1}{i\kappa} G(z^2_m, z^1_m)-\frac{1}{i\kappa\;4\pi\vert z^2_m-z^1_m\vert} \right ] +O(a^t),\;~ t>0.
\end{equation}

Combining them, we obtain:

\begin{eqnarray}\label{true-formula}
i\kappa\, n(z_m)&=&
\left[\textbf{C}^{-1}(\textbf{V}\textbf{V}^\top)^{-1}\textbf{V}\left[\mathbf{W}^{\infty} -\mathbf{V}^{\infty}\right]\textbf{V}^\top
(\textbf{V}\textbf{V}^\top)^{-1}\textbf{C}^{-1}
+ \textbf{C}^{-1}\right]_{_{1,2}} -\frac{1}{\;4\pi\vert z^2_m-z^1_m\vert}\quad\\
&&+O\left(
 a^{1-h-2t}+a^{h}\right)+O\left(a^{t}\right).\nonumber 
\end{eqnarray}

We recall also that \textcolor{black}{$\mathbf{V}=[\mathbf{V}(\theta_1),\mathbf{V}(\theta_2),\cdots,\mathbf{V}(\theta_N)]$} where  $\mathbf{V}(\theta):=\left[V^t(z^1_m, \theta), V^t(z^2_m, \theta)\right]^\top$ and $V^t(z^l_m,\theta_j)$ can be computed, see \eqref{M-1} and \eqref{one-inclusion}, as
\begin{eqnarray}\label{one-inclusion-stab}
 C^{-1}_{m,l} \left [U^\infty(z^l_m; -\theta_i,\theta_j)-V^\infty(-\theta_i,\theta_j)\right]&=&-V^t(z^l_m, \theta_i)V^t(z^l_m, \theta_j)+\;
O\left(a\right).
\end{eqnarray}

We base our remaining analysis on the two formulas (\ref{true-formula}) and (\ref{one-inclusion-stab}).

We set $\tilde{z}^l_m$ to be the possibly shifted positions of the small perturbations while moving from the second step to third step of collecting the far-fields, see Section \ref{Sec:extractform}. We set also 

$$\eta:=\max_{m:=1, 2, ..., M;\, l:=1,2}\vert \tilde{z}^l_m-z^l_m \vert$$
and
$$
\tilde{d}:=\min_{\substack{m\neq m^{\prime}\mbox{ (or) }l\neq l^{\prime}\\ m,m^{\prime}=1,\dots,M;\,l,l^{\prime}=1,2}}d(\tilde{D}_m^l, \tilde{D}_{m^{\prime}}^{l^{\prime}}) \approx a^{\tilde{t}}
$$

where $\tilde{D}_m^l$'s are the ball of center $\tilde{z}_m^l$'s and radius $a$ respectively. Let now $\delta_v, \delta_u$ and $\delta_w$ be the additive noises corresponding to $V^{\infty}, U^\infty$ and $W^\infty$ respectively. The actual measured data is 
$V_{\delta}^{\infty}, U_{\delta}^\infty$ and $W_{\delta, n}^\infty$ satisfying respectively
\begin{equation}
\vert V_{\delta}^{\infty}(-\theta_i, \theta_j)- V^{\infty}(-\theta_i, \theta_j)\vert \leq \delta_v, \; \vert U_{\delta}^{\infty}(z^l_m; -\theta_i, \theta_j)-U^{\infty}(z^l_m, -\theta_i, \theta_j) \vert \leq \delta_u$$
and $$ \vert W_{\delta}^{\infty}\textcolor{black}{(\tilde{z}^l_m; -\theta_i, \theta_j)} -W^{\infty}\textcolor{black}{(\tilde{z}^l_m; -\theta_i, \theta_j)}\vert \leq \delta_w
\end{equation}
uniformly in terms of $\theta_i, \theta_j \in \mathbb{S}^2$, $i, j=1,..., N_0$.  
Observe that since $
C^{-1}_{m,l} \left [U^\infty(z^l_m; -\theta_i,\theta_j)-V^\infty(-\theta_i,\theta_j)\right]=-V^t(z^l_m, \theta_i)V^t(z^l_m, \theta_j)+\;
O\left(a\right)\nonumber
$ 
then
\begin{eqnarray}\label{V-with-noisy-data}
 C^{-1}_{m,l} \left [U_{\delta}^\infty(z^l_m, -\theta_j,\theta_j)-V_{\delta, n}^\infty(-\theta_j,\theta_j)\right]=-V^t(z^l_m, \theta_i) V^t(z^l_m, \theta_j)+\;
O\left(a\right)+C^{-1}_{m,l} (\delta_u+\delta_v).
\end{eqnarray}

We set \textcolor{black}{$\tilde{\mathbf{V}}=[\tilde{\mathbf{V}}(\theta_1),\tilde{\mathbf{V}}(\theta_2),\cdots,\tilde{\mathbf{V}}(\theta_N)]$} where  $\tilde{\mathbf{V}}(\theta):=\left[V^t(\tilde{z}_m^1, \theta), V^t(\tilde{z}_m^2, \theta)\right]^\top$. Remember now that in the second step, the data are collected at the shifted points \textcolor{black}{$\tilde{z}^l_{m}, m=1, 2, ..., M$ and $l=1, 2$}.
Hence instead of (\ref{true-formula}), we have

\begin{eqnarray}\label{true-formula-shifted}
i\kappa n(\tilde{z}_m)&=&
\left[\textbf{C}^{-1}(\tilde{\textbf{V}}\tilde{\textbf{V}}^\top)^{-1}\tilde{\textbf{V}}
\left[\tilde{\mathbf{W}}^{\infty} -\mathbf{V}^{\infty}\right]\tilde{\textbf{V}}^\top(
\tilde{\textbf{V}}\tilde{\textbf{V}}^\top)^{-1}\textbf{C}^{-1}
+ \textbf{C}^{-1}\right]_{_{1,2}} -\frac{1}{\;4\pi\vert \tilde{z}^2_m-\tilde{z}^1_m\vert} \quad\\
&&+O\left(
 a^{1-h-2\tilde{t}}+a^{h}\right)+O\left(
 a^{\tilde{t}}\right)\nonumber
\end{eqnarray}
where we set $\tilde{\mathbf{W}}^{\infty}$ to be the matrix defined as 
$(\tilde{\mathbf{W}}^{\infty})_{i, j}:=W^{\infty}\textcolor{black}{(\tilde{z}^l_m; -\theta_i, \theta_j)}$ and correspondingly 
we set the matrix $\tilde{\mathbf{W}}_{\delta}^{\infty}$ defined as $(\tilde{\mathbf{W}}_{\delta}^{\infty})_{i, j}:=W_{\delta}^{\infty}\textcolor{black}{(\tilde{z}^l_m; -\theta_i, \theta_j)}$ . Here $\tilde{t}$ satisfies the conditions
\begin{equation}
0<h,\;0<\tilde{t},\; h+2\tilde{t}<1.
\end{equation}
\bigskip

From this formula, we get the corresponding one with noisy data $\mathbf{W}_{\delta}^{\infty}$ and $\mathbf{V}_{\delta}^{\infty}$

\begin{eqnarray}\label{true-formula-shifted-1}
i\kappa\, n(\tilde{z}_m)&=&
\left[\textbf{C}^{-1}(\tilde{\textbf{V}}\tilde{\textbf{V}}^\top)^{-1}\tilde{\textbf{V}}
\left[\tilde{\mathbf{W}}_{\delta}^{\infty} -\mathbf{V}_{\delta}^{\infty}\right]\tilde{\textbf{V}}^\top(
\tilde{\textbf{V}}\tilde{\textbf{V}}^\top)^{-1}\textbf{C}^{-1}
+ \textbf{C}^{-1}\right]_{_{1,2}} -\frac{1}{\;4\pi\vert \tilde{z}^2_m-\tilde{z}^1_m\vert} \qquad\\
&&+ (\delta_w+\delta_v)\vert \textbf{C}^{-1}(\tilde{\textbf{V}}\tilde{\textbf{V}}^\top)^{-1}
\tilde{\textbf{V}}\vert \vert \tilde{\textbf{V}} (\tilde{\textbf{V}}\tilde{\textbf{V}}^\top)^{-1}
\textbf{C}^{-1}\vert+\;~ O\left(
 a^{1-h-2\tilde{t}}+a^{h}\right)+O\left(
 a^{\tilde{t}}\right).\nonumber
\end{eqnarray}

Now, we need to replace the values of $\tilde{\textbf{V}}$ by the values of $\textbf{V}$, more precisely the computed values of $\textbf{V}$ from step 1 with noisy data. Let us set $
\textbf{V}_{\delta}$ the computed one from step 1. We deduce then from (\ref{upper-bound}) and (\ref{V-with-noisy-data}) that
\begin{equation}\label{noisy-shifted-values-V}
\vert \textbf{V}_{\delta} -\tilde{\textbf{V}} \vert =O(a+a^{h-1} (\delta_u+\delta_v)+\eta).
\end{equation} 

With this estimate at hand, the final formula is:
\begin{equation}\label{true-formula-shifted-2}
i\kappa\, n(\tilde{z}_m)=\\
\left[\textbf{C}^{-1}(\textbf{V}_{\delta}\textbf{V}_{\delta}^\top)^{-1}\textbf{V}_{\delta}
\left[\tilde{\mathbf{W}}_{\delta}^{\infty} -\mathbf{V}_{\delta}^{\infty}\right]\textbf{V}_{\delta}^\top(
\textbf{V}_{\delta}\textbf{V}_{\delta}^\top)^{-1}\textbf{C}^{-1}
+ \textbf{C}^{-1}\right]_{_{1,2}} -\frac{1}{\;4\pi\vert \tilde{z}^2_m-\tilde{z}^1_m\vert} 
\end{equation}
$$
+O(a^{2h-2})O(a+a^{h-1} (\delta_u+\delta_v)+\eta)\vert \tilde{\mathbf{W}}_{\delta}^{\infty} -\mathbf{V}_{\delta}^{\infty}\vert +
(\delta_w+\delta_v)\vert \textbf{C}^{-1}(\tilde{\textbf{V}}\tilde{\textbf{V}}^\top)^{-1}
\tilde{\textbf{V}}\vert \vert \tilde{\textbf{V}} (\tilde{\textbf{V}}\tilde{\textbf{V}}^\top)^{-1}
\textbf{C}^{-1}\vert 
$$
$$+ O\left(
 a^{1-h-2\tilde{t}}+a^{h}\right)+O\left(
 a^{\tilde{t}}\right).
$$
As $\vert \tilde{\mathbf{W}}_{\delta}^{\infty} -\mathbf{V}_{\delta}^{\infty}\vert \leq \vert \tilde{\mathbf{W}}^{\infty} -\mathbf{V}^{\infty}\vert +\delta_v +\delta_w$ and from (\ref{approximation}), applied to the points $\tilde{z}^l_m$'s instead of the points $z^l_m$'s, we have $\vert \tilde{\mathbf{W}}^{\infty} -\mathbf{V}^{\infty}\vert =O(a^{1-h})$, then
\begin{equation}\label{111}
\vert \mathbf{W}_{\delta}^{\infty} -\mathbf{V}_{\delta}^{\infty}\vert=O(a^{1-h})+\delta_w +\delta_v.
\end{equation}
What is left is the estimate of the term $\vert \textbf{C}^{-1}(\tilde{\textbf{V}}\tilde{\textbf{V}}^\top)^{-1}
\tilde{\textbf{V}}\vert$ (respectively $\vert \tilde{\textbf{V}} (\tilde{\textbf{V}}\tilde{\textbf{V}}^\top)^{-1}
 \textbf{C}^{-1}\vert$ ) or the term $\vert (\tilde{\textbf{V}}\tilde{\textbf{V}}^\top)^{-1} \tilde{\textbf{V}}
\vert $ (respectively $\vert \tilde{\textbf{V}} (\tilde{\textbf{V}}\tilde{\textbf{V}}^\top)^{-1}
\vert $ ) since the matrix $\textbf{C}=-4\pi a^{1-h}(1-a^h) I_2$. Arguing as in Lemma \ref{Matrix-V-inver}, the matrix $\tilde{\textbf{V}}\tilde{\textbf{V}}^\top$ is invertible. 
However, what we need here is its lower bound. Wet set $l_v$ its lower bound. Then $\vert \textbf{C}^{-1}(\tilde{\textbf{V}}\tilde{\textbf{V}}^\top)^{-1}
\tilde{\textbf{V}}\vert =O(a^{h-1})l^{-1}_v=\vert \tilde{\textbf{V}}(\tilde{\textbf{V}}\tilde{\textbf{V}}^\top)^{-1}
\textbf{C}^{-1}\vert$.

With these estimates, the error term in (\ref{true-formula-shifted-2}) becomes
$$
O(a^{2h-2}) O(a+a^{h-1} (\delta_u+\delta_v)+\eta)(O(a^{1-h})+\delta_w +\delta_v)+ (\delta_w+\delta_v) O(a^{2h-2})l^{-2}_v +\;~ O\left(
 a^{1-h-2\tilde{t}}+a^{h}\right)+O\left(
 a^{\tilde{t}}\right)
$$
or
$$
O(a^h)+O(a^{2h-2})(\delta_u+\delta_v)+O(a^{3h-2})(\delta_w+\delta_v)+O(a^{3h-3})(\delta_u+\delta_v)(\delta_w+\delta_v)+O(a^{h-1})\eta +O(a^{2h-2}) \eta (\delta_w+\delta_v)
$$
$$
(\delta_w+\delta_v) O(a^{2h-2})l^{-2}_v+O\left(
 a^{1-h-2\tilde{t}}\right)+O\left(
 a^{\tilde{t}}\right).
$$

If we reasonably assume that we have uniform noise for our different measurements, i.e. $\delta_u=O(\delta)$,$\delta_v=O(\delta)$ and $\delta_w=O(\delta)$, then this reduces to
$$
O(a^h)+\delta O(a^{2h-2})l^{-2}_v+O(a^{3h-3})\delta^2+O(a^{h-1})\eta +O(a^{2h-2}) \eta \delta+O\left(
 a^{1-h-2\tilde{t}}\right)+O\left(
 a^{\tilde{t}}\right).
$$
We summarize these findings in the following theorem
\begin{theorem}
Assume $n$ to be a measurable and bounded function in $\mathbb{R}^3$ such that the support of 
$(n-1)$ is a bounded domain $\Omega$. 
Let the non negative parameters $\beta, h$ and $t$ describing the set of the 
small inclusions, embedded in $\Omega$, be such that
\begin{equation}\label{cond-beta-s-h-t-Th}
 \beta =1,\; 0<h\; \mbox{ and }\; h+2t<1.
\end{equation}
We assume that the measured far-fields are contaminated with an additive noise of amplitude $\delta$, i.e. the actual measured data is $V_{\delta}^{\infty}, U_{\delta}^\infty$ and $W_{\delta, n}^\infty$ satisfying respectively
$$
\vert V_{\delta}^{\infty}(-\theta_i, \theta_j)- V^{\infty}(-\theta_i, \theta_j)\vert \leq \delta, \; \vert U_{\delta}^{\infty}(z^l_m; -\theta_i, \theta_j)-U^{\infty}(z^l_m, -\theta_i, \theta_j) \vert \leq \delta
$$
and
\begin{equation}
 \vert W_{\delta}^{\infty}\textcolor{black}{(\tilde{z}^l_m; -\theta_i, \theta_j)} -W^{\infty}\textcolor{black}{(\tilde{z}^l_m; -\theta_i, \theta_j)}\vert \leq \delta
\end{equation}
uniformly in terms of $\theta_i, \theta_j \in \mathbb{S}^2$, $i, j=1,..., N_0$. 

\begin{enumerate}
\item Let $m=1, 2,..., M$ be fixed. The vectors $\pm(V^t(z^1_m,\theta_j), V^t(z^2_m,\theta_j))$ satisfy the formulas
\begin{eqnarray}\label{V-with-noisy-data-Theorem}
 C^{-1}_{m,l} \left [U_{\delta}^\infty(z^l_m, -\theta_i,\theta_j)-V_{\delta, n}^\infty(-\theta_i,\theta_j)\right]=-V^t(z^l_m, \theta_i)V^t(z^l_m, \theta_j)+\;
O\left(a\right)+O(a^{h-1}) \delta.\quad
\end{eqnarray}

\item Let $\pm \textbf{V}_{\delta}$  be the constructed fields from step $1$. We set $\eta$ to be the maximum shift of the centers of the perturbations from step $1$ to step $2$, that we denote by $\tilde{z}^l_m, m=1,..., M$ and $l=1,2$. Let also the minimum distance between these shifted perturbations, i.e. $\tilde{d}$, be of the order $a^{\tilde{t}}$, $\tilde{t}>0$.  Using the noisy data $\tilde{\mathbf{W}}_{\delta}^{\infty}$ and $\mathbf{V}_{\delta}^{\infty}$, we have the reconstruction formulas, $l=1,2$,

\begin{equation}\label{true-formula-shifted-2-Thsas}
i\kappa n(\tilde{z}^l_m)=\\
\left[\textbf{C}^{-1}(\textbf{V}_{\delta}\textbf{V}_{\delta}^\top)^{-1}\textbf{V}_{\delta}
\left[\tilde{\mathbf{W}}_{\delta}^{\infty} -\mathbf{V}_{\delta}^{\infty}\right]\textbf{V}_{\delta}^\top(
\textbf{V}_{\delta}\textbf{V}_{\delta}^\top)^{-1}\textbf{C}^{-1}
+ \textbf{C}^{-1}\right]_{_{1,2}} -\frac{1}{\;4\pi\vert \tilde{z}^2_m-\tilde{z}^1_m\vert} 
\end{equation}
$$
+O(a^h)+\delta O(a^{2h-2})l^{-2}_v+O(a^{3h-3})\delta^2+O(a^{h-1})\eta +O(a^{2h-2}) \eta \delta+O\left(
 a^{1-h-2\tilde{t}}\right)+O\left(
 a^{\tilde{t}}\right).
$$
\end{enumerate}
\end{theorem}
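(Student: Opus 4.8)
The plan is to assemble the statement directly from the perturbation analysis prepared in the three steps of Section~\ref{Introduction-smallac-sdlp} and Section~2, treating separately the four independent sources of error: the measurement noise $\delta$, the positional shift $\eta$ of the perturbations between steps~$2$ and~$3$, the closeness exponent $\tilde t$ of the shifted pairs, and the radius $a$. The two identities that carry everything are the single-inclusion formula \eqref{M-1}--\eqref{one-inclusion} and the two-inclusion master formula \eqref{true-formula}; the proof amounts to inserting noisy and shifted quantities into these and propagating the resulting errors.

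Part~(1) is routine. Starting from \eqref{M-1}, dividing by $C_{m,l}=-4\pi a^{1-h}(1-a^h)=O(a^{1-h})$ and replacing $U^\infty,V^\infty$ by the noisy data $U^\infty_\delta,V^\infty_\delta$, each additive perturbation of size $\le\delta$ is amplified by $C_{m,l}^{-1}=O(a^{h-1})$; this produces the error $O(a)+O(a^{h-1})\delta$ of \eqref{V-with-noisy-data}, which is \eqref{V-with-noisy-data-Theorem}.

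Part~(2) proceeds in three sub-steps. First, the positional shift: since \eqref{approximation} and the recovery \eqref{reconvering-G-kappa-1} were derived for an arbitrary close pair of sampling points, they apply verbatim to $\tilde z^1_m,\tilde z^2_m$ with $t$ replaced by $\tilde t$, giving \eqref{true-formula-shifted} with error $O(a^{1-h-2\tilde t}+a^h)+O(a^{\tilde t})$. Second, the reconstructed fields from step~$1$ are $\mathbf V_\delta$ rather than the true $\tilde{\mathbf V}$ evaluated at the shifted points; here I would combine the Lipschitz bound \eqref{upper-bound}, which follows from the $W^{1,\infty}$-regularity \eqref{regularity-L-S} of the Lippmann--Schwinger solution and controls $|V^t(\tilde z^l_m,\theta)-V^t(z^l_m,\theta)|\le\tilde C\,\eta$, with \eqref{V-with-noisy-data} from Part~(1), obtaining $|\mathbf V_\delta-\tilde{\mathbf V}|=O(a+a^{h-1}(\delta_u+\delta_v)+\eta)$ as in \eqref{noisy-shifted-values-V}. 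Third, one adds the direct noise in the far-field matrices $\tilde{\mathbf W}^\infty_\delta$ and $\mathbf V^\infty_\delta$. Feeding these into the bilinear expression $\mathbf C^{-1}(\mathbf V\mathbf V^\top)^{-1}\mathbf V[\tilde{\mathbf W}^\infty-\mathbf V^\infty]\mathbf V^\top(\mathbf V\mathbf V^\top)^{-1}\mathbf C^{-1}$ and then combining with \eqref{Final-formula-2} applied at the shifted points and dividing by $i\kappa$ yields \eqref{true-formula-shifted-2-Thsas}.

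The main obstacle is the error bookkeeping through that bilinear expression, and this is exactly where the regime conditions are consumed. Since $\mathbf C^{-1}=O(a^{h-1})$ occurs twice one picks up a factor $O(a^{2h-2})$; using $|\tilde{\mathbf W}^\infty-\mathbf V^\infty|=O(a^{1-h})$ from \eqref{approximation}, the perturbation of $\mathbf V_\delta$ contributes $O(a^{2h-2})(a+a^{h-1}(\delta_u+\delta_v)+\eta)(a^{1-h}+\delta_w+\delta_v)$, while the residual noise in the data contributes $(\delta_w+\delta_v)\,O(a^{2h-2})\,l_v^{-2}$, where $l_v$ is a lower bound for the invertible matrix $\tilde{\mathbf V}\tilde{\mathbf V}^\top$ furnished by Lemma~\ref{Matrix-V-inver}. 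Expanding the products and imposing the uniform-noise hypothesis $\delta_u,\delta_v,\delta_w=O(\delta)$ collapses the total error to $O(a^h)+\delta\,O(a^{2h-2})l_v^{-2}+O(a^{3h-3})\delta^2+O(a^{h-1})\eta+O(a^{2h-2})\eta\delta+O(a^{1-h-2\tilde t})+O(a^{\tilde t})$, the expression displayed in the theorem. The delicate point, as already in the noiseless Theorem~\ref{Main-results}, is to verify that the genuine leading term $(\tilde{\mathbf G})_{1,2}=i\kappa n(\tilde z_m)+\dots$ still dominates all of these contributions, which is precisely what forces $0<h$, $h+2t<1$ and the analogous constraints $0<\tilde t$, $h+2\tilde t<1$.
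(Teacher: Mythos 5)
Your proposal is correct and follows essentially the same route as the paper: part (1) by amplifying the additive noise through $C_{m,l}^{-1}=O(a^{h-1})$, and part (2) by applying the noiseless reconstruction formula at the shifted points with $\tilde t$ in place of $t$, controlling $\vert \mathbf{V}_\delta-\tilde{\mathbf{V}}\vert$ via the Lipschitz bound from the Lippmann--Schwinger regularity together with the step-1 noise estimate, and then propagating all perturbations through the bilinear expression using $\mathbf{C}^{-1}=O(a^{h-1})$, $\vert\tilde{\mathbf{W}}^{\infty}-\mathbf{V}^{\infty}\vert=O(a^{1-h})$ and the lower bound $l_v$ of $\tilde{\mathbf{V}}\tilde{\mathbf{V}}^\top$. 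The error bookkeeping and the resulting collected error term coincide with the paper's.
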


The formula (\ref{true-formula-shifted-2-Thsas}) will make sense if the error term is of the order $o(1)$ as $a\rightarrow 0$.
It is clear that the noise level should decrease in terms of $a$. We set $\delta =O(a^{q_1})$ and $\eta=O(a^{q_2})$, then this error has the form 
\begin{equation}
O(a^h)+O(a^{q_1+2h-2})l^{-2}_v+O(a^{3h-3+2q_1})+O(a^{h-1+q_2})+
O(a^{2h-2+q_1+q_2})+O(a^{1-h-2\tilde{t}})+O\left(
 a^{\tilde{t}}\right).
\end{equation}
Hence the reconstruction formulas apply if we have the following relations:
\begin{equation}\label{conditions-step2}
0<h<1,\; 0<\tilde{t}<\frac{1-h}{2},\; q_1>2-2h,\; q_2>1-h
\end{equation}
even if the constant $l^{-1}_v$ might be quite large.  
The formulas (\ref{V-with-noisy-data-Theorem}), to compute the total fields, apply if 
\begin{equation}\label{conditions-step1}
0<h<1,\; q_1>1-h.
\end{equation}
We observe that the conditions on the noise level to reconstruct the index of refraction (i.e. using both single and multiple perturbations) are more restrictive than those needed to construct only the total fields (i.e. using only single perturbations as it is suggested in the previous literature), which is natural of course.  
\section{{Conclusion and comments}}
We have shown that by deforming a medium with multiple and close inclusions, we can extract the internal values of the coefficients modeling the medium from the far-field data. We have tested this idea for the acoustic medium modeled by an index of refraction where the small  inclusions are impedance type small scatterers.

\begin{enumerate}

\item Let us consider the more general model $\nabla \cdot c^2 \nabla +\kappa^2 n^2$, where $c$ is the velocity which we assume to be smooth and eventually different from a constant and the small perturbation are 
also of impedance type \footnote{As already mentioned in the point $4$ of Remark \ref{Remark}, this is possible only if the localized perturbations create high contrasts so that, by the 
Leontovich approximation, we replace the contrast by impedance type boundary conditions.}. In this case, arguing as in \cite{C-S:2016}, we can derive the same asymptotic expansion as in (\ref{x oustdie1 D_m farmain-2})-(\ref{fracqcfracmain-2})-(\ref{Cm-s-1}). Hence proceeding as in the three steps we described above, we obtain:
\begin{equation}\label{velocity-c}
\textbf{C}^{-1} (\textbf{V}\textbf{V}^\top)^{-1}\textbf{V}\left [\mathbf{W}^{\infty}-\mathbf{V}^{\infty}\right]\textbf{V}^\top(\textbf{V}\textbf{V}^\top)^{-1}\textbf{C}^{-1} 
+ \textbf{C}^{-1}=\tilde{\textbf{G}}+\;~ O\left(
 a^{1-h-2t}+a^{h}\right) 
\end{equation}  
and instead of (\ref{asymp-G}), we obtain
\begin{equation}\label{velocity-c-2}
G (x, z_m)= \left [ \frac{e^{i\kappa \frac{n(z_m}{c(z_m)})\vert x-z_m\vert}}{4\pi c^2(z_m) \vert x-z_m\vert}\right]+O(\ln(\vert x-z_m\vert)),\; \mbox{ as } \vert x-z_m\vert \rightarrow 0.
\end{equation}
From (\ref{velocity-c-2}), we derive
\begin{equation}\label{velocity-c-3}
G (x, z_m)= \left [ \frac{1}{4\pi c^2(z_m) \vert x-z_m\vert}\right]+
O(\ln(\vert x-z_m\vert)),\; \mbox{ as } \vert x-z_m\vert \rightarrow 0.
\end{equation}
Combining (\ref{velocity-c-3}) and (\ref{velocity-c}), with $x=z_m^1$ and $z_m:=z^2_m$, we derive the formula
\begin{equation}
c^{-2}(z_m)=4\pi \vert z^1_m-z^2_m\vert \left [
\textbf{C}^{-1} (\textbf{V}\textbf{V}^\top)^{-1} \textbf{V}\left[\mathbf{W}^{\infty}-\mathbf{V}^{\infty}\right]\textbf{V}^\top(\textbf{V}\textbf{V}^\top)^{-1}\textbf{C}^{-1} 
+ \textbf{C}^{-1}\right ]_{1, 2}
\end{equation}
$$
+O(a^t \ln\; a)\; ~+ O(a^{1-h-2t}+a^{h})
$$
for $i=1,...,M$, recalling that $\vert z^1_m-z^2_m\vert = a^t$, for $m=1,...,M$.

We see that we can derive a direct formula linking the measured data $\mathbf{W}^{\infty}$ and $\mathbf{V}^{\infty}$ to the value of the velocity $c$ on sampling points 
$z_m, m=1, 2,..., M$ in $\Omega$. The corresponding formulas using the noisy data can also be derived.


\item The more realistic model would be $\nabla \cdot c^2 \nabla +\kappa^2 n^2$ where the small perturbations 
occur in the coefficients $c$ and $n$ and not as small impenetrable obstacles 
as we dealt with in the present paper. One needs first to derive the corresponding 
asymptotic expansion taking into account the parameters modeling these small inclusions 
and then use them to design a reconstruction method to extract $c$ or/and $n$ from similar
measured data as we did in the present work. In the model we dealt with in this paper, we have chosen the perturbations to be spherical and the surface impedances to be of the form $\lambda_m:=(1-a^h)a^{-1}$. 
With this choice, the error in the approximation (\ref{x oustdie1 D_m farmain-3}) allows us to recover the fields created at the second order scattering (and not only the Born approximation). This is the key point in order to reconstruct the index of refraction $n$ using multiple perturbations. To extend this argument to the case when the small perturbations occur on the coefficients $c$ and $n$, one needs to derive the Foldy-Lax approximation for such a model allowing to recover fields created at least at the second order (and, again, not only the Born approximation).  This issue is related to the full justification of the Foldy-Lax approximation (i.e. at higher orders). 
 The justification of the accuracy and the optimal error estimate in the approximation of the scattered field by the Foldy-Lax field is in its generality a largely open issue but there is an increase of interest to understand it, see for instance 
 \cite{Bendali-Coquet-Tordeux:NAA2012, C-H:2013, DPC-SM13, C-S:2015, 
 M-M:MathNach2010, M-M-N:MMS:2011, M-M-N:Springerbook:2013}.

Finally, we do believe that this approach can be applied to 
other multiwave imaging modalities as well.
\end{enumerate}

\begin{center} {\bf{Acknowledgement}} \end{center}
This work was funded by the Deanship of Scientific Research (DRS), King Abdulaziz University, under the grant no. 20-130-36-HiCi. The authors, therefore, acknowledge with thanks DRS technical and financial support.

\bibliographystyle{abbrv}
 
 \def\cprime{$'$}

\end{document}